\documentclass[11pt]{amsart}
\usepackage{amssymb,amsmath,amsthm}
\usepackage[utf8]{inputenc}
\usepackage[italian,english]{babel}
\usepackage{mathrsfs}
\usepackage{tikz}
\usepackage{enumitem}

\theoremstyle{definition}
\newtheorem{definition}{Definition}[section]
\theoremstyle{plain}
\newtheorem{thm}[definition]{Theorem}

\newtheorem{corollary}[definition]{Corollary}
\newtheorem{prop}[definition]{Proposition} 
\newtheorem{lemma}[definition]{Lemma}

\theoremstyle{remark}

\newtheorem{remark}[definition]{Remark}

\title{A merging procedure for labelings of bipartite graphs}

\author{Paola Bonacini}
\email{paola.bonacini@unict.it}

\author{Lucia Marino}
\email{lucia.marino@unict.it}

\begin{document}
\address{Università degli Studi di Catania\\
  Viale A. Doria 6\\
95125 Catania\\
Italy}

\keywords{labelings; bipartite graph.}
\subjclass{05C78}

\maketitle

\begin{abstract}
  Let $G$ a bipartite graph with vertex bipartition $\{A,B\}$ and let $m=|E(G)|$. An $(A,B)$-uniformly ordered labeling of $G$ is a labeling $f\colon V\rightarrow [0,2m]$ which, among other conditions, requires that there exists $\lambda\in \mathbb N$ such that $f(a)\le \lambda$ and $f(b)>\lambda$ for all $a\in A$ and $b\in B$. The existence of such a labeling for $G$ implies the existence of a cyclic $G$-decomposition of $K_{2mx+1}$ for all positive integers $x$. In this paper, as a starting point, through this type of labeling we prove the existence of a cyclic $G$-decomposition in the case that $G$ is a cycle of even length with either one or two pendant paths of any length. Then, through a merging procedure, we are able to get this type of labeling for a specific class of bipartite graphs, which are obtained by iteratively adding an even cycle and a pendant path.
\end{abstract}

\section{Introduction}

Let $K_n=(V,E)$ be the complete graph on the vertex set $V$, where $|V|=n$, with $n\in \mathbb N$. Given a subgraph $G$ of $K_n$, a \emph{$G$-design} of order $v$ is a couple $\Sigma=(X,\mathcal B)$, with $|X|=v$, where $\mathcal B$ is a set of disjoint graphs isomorphic to $G$ that decompose $K_v$. The elements of $\mathcal B$ are called \emph{blocks}.  A $G$-design $\Sigma=(X,\mathcal B)$ is called \emph{cyclic} if the automorphism group of $\mathcal B$ contains the cyclic group of order $v$. In such a case, the family $\mathcal B$ of blocks is said to be a \emph{cyclic decomposition} of $K_v$.

Given a graph $G=(V,E)$ without isolated vertices, with $n=|V|$ and $m=|E|$, a labeling of $G$ is usually defined as an injective function $f\colon V\rightarrow \mathbb N$. A labeling $f$ of $G$ induces a function $\tilde{f}\colon E \rightarrow \mathbb N\setminus \{0\}$ defined as $\tilde f(\{u,v\}))=|f(u)-f(v)|$ for any $\{u,v\}\in E$. We call \emph{difference set} of the labeling $f$ the set $\{|f(u)-f(v)|\mid \{u,v\}\in E\}$.

For any $a,b\in \mathbb N$, we denote by $[a,b]$ the set of integers $x\in\mathbb N$ such that $a\le x\le b$. In \cite{R} Rosa called $\rho$-labeling a labeling satisfying the following two conditions:
\begin{enumerate}
  \item $\operatorname{Im} f\subseteq [0,2m]$
        \item $\operatorname{Im} \tilde f=\{x_1,\dots,x_m\}$, where either $x_i=i$ or $x_i=2m+1-i$ for each $i\in [1,m]$.
\end{enumerate}
 Rosa in \cite{R} proved:
\begin{thm}[{\cite[Theorem 7]{R}}]
A cyclic decomposition of the complete graph $K_{2m+1}$ into subgraphs isomorphic to a given graph $G$ with $m$ edges exists if and only if there exists a $\rho$-labeling of the graph $G$.
\end{thm}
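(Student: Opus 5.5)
The plan is to identify the vertex set of $K_{2m+1}$ with $\mathbb Z_{2m+1}$, so that the cyclic group acts by $x\mapsto x+1$. The key notion is the \emph{length} of an edge $\{u,v\}$ of $K_{2m+1}$, namely $\min(|u-v|, 2m+1-|u-v|)\in[1,m]$. For each $d\in[1,m]$ there are exactly $2m+1$ edges of length $d$, and they form a single orbit under translation. Moreover, the translates $G+i$, $i\in\mathbb Z_{2m+1}$, of a copy $G$ with pairwise distinct edge lengths are $2m+1$ distinct graphs, because a nontrivial translation fixing $G$ would force repeated lengths. We will use both facts in each direction.

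($\Leftarrow$) Let $f$ be a $\rho$-labeling of $G$. We embed $G$ in $K_{2m+1}$ by sending each vertex $v$ to $f(v)\bmod (2m+1)$. This is injective because $\operatorname{Im} f\subseteq[0,2m]$. The edge $\{u,v\}$ has length $\tilde f(\{u,v\})$ when $\tilde f(\{u,v\})\le m$, and $2m+1-\tilde f(\{u,v\})$ otherwise. Condition (2) says precisely that these lengths are $1,\dots,m$, each appearing once. Hence the $2m+1$ translates $G+i$ together contain every edge of each length exactly once. So they are edge-disjoint and cover $K_{2m+1}$, giving a cyclic decomposition.

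($\Rightarrow$) Let $\mathcal B$ be a cyclic decomposition of $K_{2m+1}$ into $\frac{(2m+1)m}{m}=2m+1$ copies of $G$. The translation group acts on $\mathcal B$. Partition $\mathcal B$ into orbits; an orbit of a block with stabilizer of order $s$ has size $(2m+1)/s$. The main obstacle is to show there is a single orbit with trivial stabilizer.

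I would argue as follows. If a block $B$ has stabilizer $H\le\mathbb Z_{2m+1}$ of order $s>1$, then $H$ acts freely on the vertices, so it acts on the edges of $B$ with orbits of size $s$ or, for edges $\{u,u+h\}$ swapped by $h$, of size $s/2$. Since $2m+1$ is odd, $s$ is odd and no edge is swapped, so every edge orbit has size exactly $s$. All edges in one $H$-orbit have the same length. Counting then shows that the orbit of $B$ contributes $(2m+1)/s$ blocks, which together cover $(2m+1)/s$ full length classes' worth of edges, i.e.\ $m/s$ lengths each used $s$ times per block. In every case, each length class of size $2m+1$ is covered exactly once across all of $\mathcal B$.

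With that accounting in place, I would simply take any block $B$ and argue that its edges have distinct lengths. If two edges of $B$ had the same length, they would be equivalent under some translation $t$. Then $B$ and $B+t$ share an edge, so $B=B+t$ by disjointness of the blocks, and $t$ lies in the stabilizer. It remains to exclude a nontrivial stabilizer, or to handle it. Here I expect the genuine difficulty, since Rosa's statement is the ``if and only if''. Blocks with nontrivial stabilizer form short orbits, and the theorem as stated implicitly requires that some block's edge lengths be all distinct; I would try to derive a contradiction from a free $H$-action on $V(B)$ combined with the edge count $m$ and the total count of $2m+1$ blocks.

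Once a block $B$ with $m$ distinct lengths is found, we label each vertex by its representative in $[0,2m]$. Each length $d$ then arises as a difference equal to either $d$ or $2m+1-d$, which is exactly condition (2) of a $\rho$-labeling.
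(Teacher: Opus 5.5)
You have a genuine gap in the ($\Rightarrow$) direction, at exactly the point you flag. Your ($\Leftarrow$) direction is correct and complete. For necessity, though, you never show that a block $B$ has trivial stabilizer. Without that, your argument that the edges of $B$ have pairwise distinct lengths stops at ``$t$ lies in the stabilizer''. You only say you would ``try to derive a contradiction''. As written, the proof of necessity is therefore unfinished.

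The missing idea is a one-line coprimality observation, and you already have both ingredients. Let $H$ be the stabilizer of $B$ in $\mathbb Z_{2m+1}$ and $s=|H|$.
\begin{itemize}
\item By Lagrange, $s$ divides $2m+1$.
\item No nonzero $h\in\mathbb Z_{2m+1}$ fixes an edge $\{u,v\}$: that would force $h=0$ or $2h=0$, and $2m+1$ is odd. So $H$ acts semiregularly on $E(B)$, and $s$ divides $|E(B)|=m$. Your own count ``$m/s$ lengths'' already uses this silently.
\end{itemize}
Since $\gcd(m,2m+1)=1$, we get $s=1$.

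Hence every block has trivial stabilizer, and $\mathcal B$ is a single orbit of size $2m+1$. Your argument then goes through. Two distinct edges of $B$ with the same length differ by a translation $t\neq 0$. Then $B+t\in\mathcal B$ shares an edge with $B$, which forces $B+t=B$ and $t\in H=\{0\}$, a contradiction. So the $m$ edges of $B$ realize the $m$ lengths $1,\dots,m$ exactly once each. Labeling vertices by their representatives in $[0,2m]$, transported to $G$ through an isomorphism $G\cong B$, then gives a $\rho$-labeling.

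One small slip in your counting: the orbit of $B$ covers $m/s$ full length classes, not $(2m+1)/s$. The paper itself only cites Rosa's theorem and gives no proof, so there is no route of the paper's to compare against.
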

Thanks to this result, graph labelings have received a lot of attention since their introduction. The most extensive reference for the subject is the dynamic survey \cite{G}. In this paper we are going to focus our attention on bipartite graphs. If a graph $G$ without isolated vertices is bipartite with vertex bipartition $\{A,B\}$ and $f$ is a labeling of $G$, two more conditions for $f$  can be considered:
\begin{enumerate}
  \item[$(3)$] $f(a)<f(b)$ for each $\{a,b\}\in E$;
        \item[$(4)$] there exists $\lambda\in \mathbb N$ such that $f(a)\le \lambda$ for all $a\in A$ and $f(b)>\lambda$ for all $b\in B$.
\end{enumerate}
A labeling is called $\rho^+$-labeling (or also \emph{ordered} $\rho$-labeling) if it is a $\rho$-labeling satisfying the above condition $(3)$ and it is called \emph{uniformly ordered} $\rho$-labeling or $\rho^{++}$-labeling if it is a $\rho$-labeling satisfying the above condition $(4)$, where obviously a $\rho^{++}$-labeling is a $\rho^+$-labeling. In \cite{EVP} it has been proved the following:
\begin{thm}[{\cite[Theorem 5]{EVP}}]  \label{T:0}
If a bipartite graph $G$ with $m$ edges has a $\rho^+$-labeling and $x$ is any positive integer, then there exists a cyclic $G$-decomposition of $K_{2mx+1}$.
\end{thm}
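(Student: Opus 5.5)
The plan is to build, from a $\rho^+$-labeling $f$ of $G$, a family of $x$ labelings of $G$ inside $\mathbb Z_{2mx+1}$. Their difference multisets together cover each pair $\{\pm d\}$ with $d\in[1,mx]$ exactly once. The $x$ translates-orbits under $\mathbb Z_{2mx+1}$ are then the required cyclic $G$-decomposition of $K_{2mx+1}$. This is the standard base-block argument: a set of base copies whose edge differences partition $\mathbb Z_{2mx+1}\setminus\{0\}$ modulo sign develops into a cyclic decomposition.

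First we normalize. Write the edges as $\{a,b\}$ with $a\in A$, $b\in B$ and $f(a)<f(b)$, which condition $(3)$ allows. Each edge gets a label $d=f(b)-f(a)\in[1,2m]$. By the $\rho$ condition these labels are $x_1,\dots,x_m$ with $x_i\in\{i,2m+1-i\}$. So each index $i\in[1,m]$ occurs exactly once, with a sign $\varepsilon_i=\pm1$ according as $x_i=i$ or $x_i=2m+1-i\equiv -i \pmod{2m+1}$.

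For $k\in[0,x-1]$ we define $f_k$ by $f_k(a)=f(a)$ for $a\in A$ and $f_k(b)=f(b)+2mk$ for $b\in B$. Each edge then has difference $x_i+2mk$. The key computation is that these differences, as $k$ ranges over $[0,x-1]$ and $i$ over $[1,m]$, give exactly one representative of each pair $\{d,-d\}$ modulo $2mx+1$.

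To check this, split by $\varepsilon_i$. If $x_i=i$, then $i+2mk$ runs through the values $\equiv i \pmod{2m}$ in $[1,2mx]$ that are at most $mx$ or not, depending on $k$. A cleaner choice avoids the case analysis: for edges with $\varepsilon_i=+1$ use shifts giving $i+2mk$, and for edges with $\varepsilon_i=-1$ note that $2m+1-i+2mk\equiv -(i+2m(x-1-k))\pmod{2mx+1}$. Hence the differences represent $\pm(i+2mk')$ with $k'=x-1-k$ ranging over $[0,x-1]$. So for each $i$ the set $\{\pm(i+2mk)\}_k$ is hit exactly once. The values $i+2mk$, with $i\in[1,m]$ and $k\in[0,x-1]$, together with their negatives, cover all nonzero residues modulo $2mx+1$ exactly once. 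This holds because $\{i+2mk\}$ and $\{-(i+2mk)\}=\{2mx+1-i-2mk\}$ partition $[1,2mx]$: the first set consists of the residues $\in[1,m]\pmod{2m}$ and the second of those $\in[m+1,2m]$.

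Finally we need injectivity of each $f_k$ modulo $2mx+1$, so that each base block is a genuine copy of $G$. This is the main obstacle, since shifting $B$ by $2mk$ could make some $f_k(b)$ coincide with some $f(a)$ mod $2mx+1$. Here condition $(3)$ alone is weak. I would first try the refinement that $f(a)<f(b)$ for adjacent pairs, with all labels in $[0,2m]$. Then every $f_k(b)\in[1,2m+2mk]\subseteq[1,2mx]$ and $f(a)\in[0,2m]$. A collision $f(a)=f(b)+2mk$ with $k\ge1$ would force $f(b)=0$ and $f(a)=2mk\le 2m$, hence $k=1$, $f(a)=2m$, $f(b)=0$. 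If this case arises, it must be excluded by a small adjustment, such as translating all labels or using shifts $2mk$ only on the side that avoids it. Vertices in the same class keep distinct labels trivially, as a common translate preserves injectivity. If the argument for non-adjacent cross-class vertices gets stuck, I would fall back on the bipartite structure: replace the isomorphic copy by one where $A$ and $B$ images are shown disjoint via a parity or interval argument.
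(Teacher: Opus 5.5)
Your construction matches the standard argument behind the cited result of \cite{EVP}. You keep $f$ on $A$, add $2mk$ on $B$ for $k\in[0,x-1]$, and develop the $x$ base blocks in $\mathbb Z_{2mx+1}$. The paper gives no proof of this statement and only cites it. Your difference count is correct: an edge with label $2m+1-i$ contributes $-(i+2m(x-1-k))$, so each pair $\pm(i+2mk)$ is hit exactly once, and these pairs partition $\mathbb Z_{2mx+1}\setminus\{0\}$.

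The gap is the injectivity step, which you flag as the main obstacle and leave open. You correctly reduce a possible collision to $f(a)=2m$, $f(b)=0$, $k=1$, but the fixes you propose cannot work. Translating all labels by a constant leaves the collision condition $f(a)-f(b)\equiv 2mk$ unchanged. So does shifting $A$ by $-2mk$ instead of shifting $B$ by $2mk$. The ``parity or interval'' fallback is not an argument.

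What is missing is the observation that this case never arises. $G$ has no isolated vertices, so every $b\in B$ has a neighbour $a$, and condition $(3)$ gives $f(b)>f(a)\ge 0$, i.e.\ $f(b)\ge 1$. (Your own bound $f_k(b)\in[1,2m+2mk]$ silently assumed this.) Symmetrically, $f(a)\le 2m-1$ for every $a\in A$. Hence for $k\ge 1$ every $B$-label $f(b)+2mk$ lies in $[2m+1,2mx]$, while every $A$-label lies in $[0,2m-1]$. For $k=0$ injectivity is just that of $f$. All labels are integers in $[0,2mx]$, so distinct labels stay distinct modulo $2mx+1$, and each base block is a genuine copy of $G$. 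With this one line replacing your contingency plans, the proof is complete.
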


It is remarkable that, given a $\rho^+$-labeling of a bipartite graph $G$ with $m$ edges, by looking at the proof of Theorem \ref{T:0} it is extremely easy to construct a cyclic decomposition of $K_{2mx+1}$ for any $x>0$. So, when we give a $\rho^+$-labeling of $G$, we are actually providing a specific cyclic decomposition of $K_{2mx+1}$.

In this paper, given a graph $G=(V,E)$ without isolated vertices, in a labeling of $G$ we will specify the codomain and so a labeling is an injective function $f\colon V\rightarrow [0,t]$, with $t\in\mathbb N$. So, the induced function on the edges is $\tilde{f}\colon E\rightarrow [1,t]$ given by $\tilde f(\{u,v\}))=|f(u)-f(v)|$ for any $\{u,v\}\in E$. If $G$ is a bipartite graph without isolated vertices with a vertex bipartition $\{A,B\}$, an \emph{$(A,B,t)$-uniformly ordered labeling} of $G$ is a labeling $f\colon V\rightarrow [0,t]$ such that:
\begin{enumerate}[label={(\alph*)}]
  \item $t\ge 2m$
  \item $\tilde{f}$ is injective
  \item if $\operatorname{Im}(\tilde{f})=\{x_1,\dots,x_m\}$, then it doesn't exist any $i\in\{1,\dots,m\}$ such that both $i$ and $t+1-i\in\operatorname{Im}(\tilde{f})$
  \item condition $(4)$ above holds.
\end{enumerate}
Clearly, if $t=2m$, this labeling trivially corresponds to a $\rho^{++}$-labeling and  $f$ will be called an $(A,B)$-uniformly ordered labeling of $f$, thus simply specifying the sets of vertices receiving the lower and upper labelings. A labeling satisfying conditions $(a)$, $(b)$ and $(c)$ will be called $\overline{\rho}$-labeling. (Note that an $(A,B)$-uniformly ordered labeling is lvactually a uniformly ordered $\rho$-labeling. However, since we want to specify the sets $A$ and $B$ and the labelings in the paper are, in the end, all $\rho$-labelings, we say that these labelings are $(A,B)$-uniformly ordered or just uniformly ordered.)

Let $f\colon V\rightarrow [0,t]$ a labeling of the vertices of a graph $G$ and let $k\in [0,t]$. We denote by $f_k$ the labeling $f_k\colon V\rightarrow [0,t]$ defined by:
\[
  f_k(v)=f(v)+k \quad\forall v\in V,
\]
where the sum is taken $\!\mod (t+1)$. Obviously, $f_0=f$ and $f_k$ is called \emph{$k$-shift} of $f$. Note that $\widetilde{f_k}=\tilde{f}$ for all $k\in [0,t]$.

In this paper, in Section $2$ we prove some general results on shifts of uniformly ordered labelings. In Section $3$ we introduce for bipartite graphs two types of labelings, called of \emph{alternating type}, which either are uniformly ordered or with uniformly ordered shifts. These types of labelings and the technical lemmas in these sections are the key in the merging technique, that we use in Section $4$ and, above all, in Section $5$ and which turns out to be straightforward.

Let $k,m,n_1,\dots,n_k\in\mathbb N$. In this paper we introduce a connected graph, denoted by $C_{m,(n_1,\dots,n_k)}$, with $m+n_1+\dots+n_k$ vertices and $m+n_1+\dots+n_k$ edges, consisting of a cycle $C_m$ and $k$ pendant paths $P_{n_1+1}$, \dots, $P_{n_k+1}$, paths and cycle all having exactly just one vertex in common, that we call the \emph{root} of the graph.

In Theorem \ref{T:1} we prove that $C_{m,(n)}$, for any $m$ even and any $n\in\mathbb N$, admits a uniformly ordered labeling. This result, together with the case $m$ odd considered in \cite{BEOV}, shows the existence of a cyclic $C_{m,(n)}$-decomposition of $K_{2(m+n)x+1}$ for all $m,n,x\in\mathbb N$, with $m\ge 3$ and $x\ge 1$. Moreover, in Theorem \ref{T:3} we also prove that $C_{m,(n_1,n_2)}$, for any $m,n_1,n_2\in\mathbb N$ and $m$ even, admits a uniformly ordered labeling.

Let $t,m_1,\dots,m_t,n_1,\dots,n_t\in\mathbb N$, with $m_1,\dots,m_t\ge 3$. We denote by
\[
  C_{m_1,(n_1),m_2,(n_2),\dots,m_t,(n_t)}
\]
  the graph with $m_1+\dots+m_t+n_1+\dots+n_t-(t-1)$ vertices and $m_1+\dots+m_t+n_1+\dots+n_t$ edges consisting of $t$ cycles of lengths $m_1$,\dots,$m_t$ and $t$ paths $P_{n_1+1}$,\dots, $P_{n_t+1}$ such that:

\begin{itemize}
  \item the cycle $C_{m_1}$ has exactly one vertex in common with the path $P_{n_1+1}$
  \item for any $i=1,\dots,t-1$ the path $P_{n_i+1}$ has an end vertex in common with $C_{m_i}$ and the other end vertex with the cycle $C_{m_{i+1}}$
  \item for any $i=2,\dots,t$ the cycle $C_{m_i}$ has one vertex in common with $P_{n_{i-1}+1}$ and one with $P_{n_i+1}$ and these two vertices have distance $2$
        \item no other vertices are common to cycles and paths.
\end{itemize}
The vertices of this graph that have degree greater than $2$ are called \emph{roots}. See Figure \ref{F:1} for an example.

\begin{figure}[h!] \label{F:1}
\begin{tikzpicture}[scale=0.9]
  \draw (0,0) -- (1,0) -- (2,0) -- (3,0) -- (4,0) -- (4,1) -- (5,2) -- (6,1) -- (7,1) -- (7,0) -- (8,0) -- (9,0) -- (10,0) -- (11,0);
  \draw (0,0) -- (0,1) -- (1,1) -- (1,0);
  \draw (3,0) -- (3,1) -- (4,1) -- (4,0);
  \draw (5,2) -- (6,3) -- (7,3) -- (8,2) -- (7,1);
  \draw (7,1) -- (8,1) -- (8,0);
  \fill (0,0) circle (2pt);
  \fill (0,1) circle (2pt);
  \fill (1,0) circle (2pt);
  \fill (1,1) circle (2pt);
  \fill (2,0) circle (2pt);
  \fill (3,0) circle (2pt);
  \fill (3,1) circle (2pt);
  \fill (4,0) circle (2pt);
  \fill (4,1) circle (2pt);
  \fill (5,2) circle (2pt);
  \fill (6,1) circle (2pt);
  \fill (6,3) circle (2pt);
  \fill (7,0) circle (2pt);
  \fill (7,1) circle (2pt);
  \fill (7,3) circle (2pt);
  \fill (8,0) circle (2pt);
  \fill (8,1) circle (2pt);
  \fill (8,2) circle (2pt);
  \fill (9,0) circle (2pt);
  \fill (10,0) circle (2pt);
  \fill (11,0) circle (2pt);
\end{tikzpicture}
\caption{The graph $C_{4,(2),4,(1),6,(0),(4),3}$}
\end{figure}

In Theorem \ref{T:5} we prove the existence of a uniformly ordered labeling  for the graph $C_{m_1,(n_1),m_2,(n_2),\dots,m_t,(n_t)}$ in the case that $m_i\equiv 0\mod 4$ for $i=1,\dots,t-1$ and $n_1,\dots,n_{t-1},m_t$ even. Theorem \ref{T:5} partially extends an analogous result obtained in \cite{BBCEPSZ} in which, through various types of labelings, it has been proved the existence of a cyclic $C_{m_1,(0),m_2,(0)}$-decomposition of $K_{2(m_1+m_2)x+1}$ for any $m_1,m_2,x\in\mathbb N$, with $m_1,m_2\ge 3$ and $x\ge 1$.

\section{Uniformly ordered labelings}

In this section we will state some lemmas on labelings of bipartite graphs. In this paper, all the graphs have no isolated vertices.

\begin{lemma}  \label{L:1}
  Let $G$ be a bipartite graph with vertex bipartition $\{A,B\}$, where $|A|,|B|\ge 2$. Let $f\colon A\cup B\rightarrow [0,t]$ be a $\overline{\rho}$-labeling of the vertices of $G$, with $t\in\mathbb N$. Then there exist $k\in [0,t]$ such that $f_k$ is $(A,B,t)$ or $(B,A,t)$-uniformly ordered if and only if either $[\min f(A),\max f(A)]\cap f(B)=\emptyset$ or $[\min f(B),\max f(B)]\cap f(A)=\emptyset.$
\end{lemma}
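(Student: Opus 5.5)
The plan is to reduce everything to condition $(d)$ by showing that shifting does not affect conditions $(a)$, $(b)$, $(c)$, and then to read $(d)$ on the cyclic group $\mathbb Z_{t+1}$, where a $k$-shift is simply a rotation. Condition $(a)$ does not involve $f$. For the edge labels, the paper observes that $\widetilde{f_k}=\tilde f$. To be safe, note the following directly. For an edge $\{u,v\}$ with $\tilde f(\{u,v\})=d$, the value $\widetilde{f_k}(\{u,v\})$ is either $d$ or $t+1-d$, because $f_k(u)-f_k(v)\equiv f(u)-f(v)\pmod{t+1}$. Since $f$ satisfies $(c)$, the map $d\mapsto\{d,t+1-d\}$ is injective on $\operatorname{Im}\tilde f$. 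Hence $\widetilde{f_k}$ is still injective, and no pair $\{i,t+1-i\}$ appears twice in its image. So every $f_k$ is again a $\overline{\rho}$-labeling. Injectivity of $f_k$ is clear, since it is a bijection of $[0,t]$ composed with $f$. It therefore suffices to prove that some $f_k$ satisfies $(4)$ for $(A,B)$ or $(B,A)$ if and only if one of the two interval conditions holds.

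For ($\Leftarrow$), assume without loss of generality that $[\min f(B),\max f(B)]\cap f(A)=\emptyset$; the other case is symmetric. Put $\beta=\min f(B)$, $\mu=\max f(B)$, and $k=t+1-\beta \pmod{t+1}$, so that $f_k(v)\equiv f(v)-\beta$.
\begin{itemize}
\item Every $b\in B$ has $f_k(b)=f(b)-\beta\in[0,\mu-\beta]$.
\item An $a\in A$ with $f(a)>\mu$ has $f_k(a)=f(a)-\beta>\mu-\beta$.
\item An $a\in A$ with $f(a)<\beta$ has $f_k(a)=f(a)-\beta+t+1\ge t+1-\beta>\mu-\beta$.
\end{itemize}
There is no third possibility for $a\in A$, by the hypothesis. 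So $\lambda=\mu-\beta$ witnesses that $f_k$ is $(B,A,t)$-uniformly ordered. Symmetrically, the condition on $f(A)$ gives an $(A,B,t)$-uniformly ordered shift.

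For ($\Rightarrow$), suppose $f_k$ is $(A,B,t)$-uniformly ordered with threshold $\lambda$; the $(B,A,t)$ case is symmetric. Then $f_k(A)\subseteq[0,\lambda]$ and $f_k(B)\subseteq[\lambda+1,t]$. We recover $f$ as $f=f_{t+1-k}\circ$, that is, by rotating back by $-k$ modulo $t+1$. The two complementary circular arcs $[0,\lambda]$ and $[\lambda+1,t]$ of $\mathbb Z_{t+1}$ are sent to two complementary arcs. At most one of the images can pass through the cut between $t$ and $0$, so at least one of them is an honest integer interval $I\subseteq[0,t]$.
\begin{itemize}
\item If the image of $[\lambda+1,t]$ is such an interval $I$, then $f(B)\subseteq I$ and $f(A)\cap I=\emptyset$. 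Hence $[\min f(B),\max f(B)]\subseteq I$ misses $f(A)$.
\item Otherwise the image of $[0,\lambda]$ is an interval, and the same argument gives $[\min f(A),\max f(A)]\cap f(B)=\emptyset$.
\end{itemize}

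The only delicate point is bookkeeping for the wrap-around. In ($\Rightarrow$) one must argue carefully that exactly one of two complementary arcs can contain the cut point; in ($\Leftarrow$) one must verify the inequality $t+1-\beta>\mu-\beta$ for the wrapped $A$-labels. The hypothesis $|A|,|B|\ge 2$ does not seem essential to this argument. I expect it only rules out degenerate situations in which both interval conditions hold trivially.
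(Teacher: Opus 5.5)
Your proof is correct. It rests on the same picture as the paper's proof: a shift is a rotation of $\mathbb Z_{t+1}$, and some shift is uniformly ordered exactly when $f(A)$ and $f(B)$ lie in complementary arcs. Both directions are organized differently, though.

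For ($\Rightarrow$) the paper argues by contraposition. From two nonempty intersections it extracts an interleaved quadruple $f(a_1)<f(b_1)<f(a_2)<f(b_2)$ (or the reverse). It then splits $[0,t]$ into ranges of $k$ on which the wrap pattern of these four labels is fixed, with a separate decomposition when $f(a_1)=0$. Your direct argument avoids that case analysis, and it is easy to make fully explicit:
\begin{itemize}
\item rotating $[\lambda+1,t]$ back by $k$ gives the honest interval $[\lambda+1-k,t-k]$ when $k\le\lambda+1$;
\item rotating $[0,\lambda]$ back gives $[t+1-k,t+1-k+\lambda]$ when $k\ge\lambda+1$.
\end{itemize}
At $k=\lambda+1$ neither arc wraps, so ``exactly one'' in your closing paragraph should read ``at most one''.

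For ($\Leftarrow$) the paper sets aside the already ordered case and then exhibits whole intervals of admissible $k$ for both orientations. It records these ranges in the corollary immediately after the lemma and uses them later. Your single choice $k\equiv-\min f(B)$ is shorter and case-free, but it gives existence only.

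Two further points are in your favour.
\begin{itemize}
\item You are right that $|A|,|B|\ge 2$ is never needed. Nonemptiness of both intersections already forces it (by injectivity of $f$), and neither proof of ($\Leftarrow$) uses it.
\item You check that $f_k$ is again a $\overline{\rho}$-labeling, which the paper takes for granted via $\widetilde{f_k}=\tilde f$. That identity holds only up to $d\mapsto t+1-d$, so your version is the accurate one.
\end{itemize}
Note that your check uses condition (c) in the form ``no two elements of $\operatorname{Im}\tilde f$ sum to $t+1$''. This is clearly the intended meaning, but the paper's wording quantifies only over $i\in\{1,\dots,m\}$, which is weaker when $t>2m$.
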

\begin{proof}
 Suppose that at the same time  $[\min f(A),\max f(A)]\cap f(B)\neq \emptyset$ and $[\min f(B),\max f(B)]\cap f(A)\neq \emptyset$. This implies that there exist $a_1,a_2\in A$ and $b_1,b_2\in B$ such that either $f(a_1)<f(b_1)<f(a_2)<f(b_2)$ or $f(b_1)<f(a_1)<f(b_2)<f(a_2).$ Suppose that $f(a_1)<f(b_1)<f(a_2)<f(b_2)$. Then, since for $f(a_1)>0$:
 \begin{multline*}
  [0,t]=[0,t-f(b_2)]\cup [t+1-f(b_2),t-f(a_2)]\cup\\
  \cup [t+1-f(a_2),t-f(b_1)]\cup [t+1-f(b_1),t-f(a_1)]\cup\\
  \cup [t+1-f(a_1),t],
\end{multline*}
and for $f(a_1)=0$:
 \begin{multline*}
  [0,t]=[0,t-f(b_2)]\cup [t+1-f(b_2),t-f(a_2)]\cup\\
  \cup [t+1-f(a_2),t-f(b_1)]\cup [t+1-f(b_1),t],
\end{multline*}
in both cases it follows that there isn't any $k\in[0,t]$ such that either $f_k$ is $(A,B,t)$ or $(B,A,t)$-uniformly ordered.

Conversely, suppose that  $[\min f(A),\max f(A)]\cap f(B)=\emptyset$ and that $f$ is not $(A,B,t)$ or $(B,A,t)$-uniformly ordered, because otherwise the statement holds for $k=0$. So, if $B=\{b_1,\dots,b_s\}$ with $f(b_1)<\dots<f(b_s)$, there exists $p\in [1,s-1]$ such that $f(b_p)<f(a)<f(b_{p+1})$ for any $a\in A$. Then, it is easy to see that for $k\in [t+1- f(b_{p+1}),t-\max f(A)]$ $f_k$ is $(B,A,t)$-uniformly ordered and for $k\in [t+1-\min f(A),t-f(b_p)]$ $f_k$ is $(A,B,t)$-uniformly ordered.
\end{proof}

By the proof of the Lemma \ref{L:1} the following is immediate:
\begin{corollary}  \label{C:1}
  Let $G$ be a bipartite graph with vertex bipartition $\{A,B\}$, where $|A|,|B|\ge 2$. Let $f\colon A\cup B\rightarrow [0,t]$ be a non uniformly ordered $\overline{\rho}$-labeling of the vertices of $G$, with $t\in\mathbb N$, such that $[\min f(A),\max f(A)]\cap f(B)=\emptyset$. Let $x=\max \{f(b)\mid f(b)<f(a)\, \forall a\in A\} $ and $y=\min \{f(b)\mid f(b)>f(a)\,\forall a\in A\}.$ Then:

  \begin{enumerate}
        \item the labeling $f_k$ is a $(A,B,t)$-uniformly ordered labeling of $G$ if and only if $k\in [t+1-\min f(A),t-x]$;
    \item  the labeling $f_k$ is a $(B,A,t)$-uniformly ordered labeling of $G$ if and only if $k\in [t+1-y,t-\max f(A)]$.
   \end{enumerate}
\end{corollary}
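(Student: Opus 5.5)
Since $\tilde f_k=\tilde f$ for every $k$, each shift $f_k$ is again a $\overline{\rho}$-labeling. So $f_k$ is $(A,B,t)$-uniformly ordered exactly when condition (4) holds for $f_k$, that is, when $\max f_k(A)<\min f_k(B)$. Likewise, $f_k$ is $(B,A,t)$-uniformly ordered exactly when $\max f_k(B)<\min f_k(A)$. The plan is to compute these extrema under the map $v\mapsto f(v)+k \bmod (t+1)$, which is a rotation of the cycle $\mathbb Z_{t+1}$ cut at the point $t+1-k$.

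Write the labels of $B$ as $f(b_1)<\dots<f(b_s)$. Because $[\min f(A),\max f(A)]\cap f(B)=\emptyset$ and $f$ is not uniformly ordered, there is $p\in[1,s-1]$ with $f(b_p)<f(a)<f(b_{p+1})$ for all $a\in A$. In the notation of the statement, $x=f(b_p)$ and $y=f(b_{p+1})$. Going around the cycle $\mathbb Z_{t+1}$, the labels therefore form three consecutive arcs: first the $B$-labels up to $x$, then all of $A$ in $[\min f(A),\max f(A)]$, then the $B$-labels from $y$ onwards, which wrap back to the first arc. After the shift by $k$, the new minimum of $[0,t]$ is the first label at or after the cut point $c=t+1-k$, taken cyclically.

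\textbf{Part (1).} Requiring all $A$-labels to precede all $B$-labels after the shift means that the cut must fall in the gap between $x$ and $\min f(A)$, i.e. $x<c\le \min f(A)$.
\begin{itemize}
\item If $c$ lies in this gap, then $A$ occupies $[f(a)+k-(t+1)]$, which are the smallest values, followed by $B$-labels $y,\dots$ and then the wrapped labels up to $x$. This gives the desired order.
\item If $c$ lies anywhere else, there are two possibilities. Either $c$ falls strictly inside $[\min f(A),\max f(A)]$, which splits $A$ so that $A$ and $B$ interleave. Or $c$ falls in $(\max f(A),t]\cup[0,x]$, in which case some $B$-label precedes the $A$-block after the shift; the $B$-labels in $[c, x]$ or $[c,t]$ appear before the block of $A$.
\end{itemize}
The condition $x<t+1-k\le \min f(A)$ translates to $k\in[t+1-\min f(A),\,t-x]$.

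\textbf{Part (2).} Symmetrically, $B$ must entirely precede $A$ after the shift, so the cut must fall in the gap $(\max f(A),y]$. This gives $k\in[t+1-y,\,t-\max f(A)]$.

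The \textbf{main obstacle} is the boundary bookkeeping: whether the cut point may equal a label, and how to treat $k=0$, where $c=t+1$ means no wrap. For $k=0$ the labeling $f$ itself is not uniformly ordered, so $k=0$ must lie outside both intervals. This is consistent, since $t+1-\min f(A)\ge1$ and $t-\max f(A)<t+1-y$ is impossible to contain $0$ unless $y\ge t+1$, which cannot happen. I would check these endpoint cases explicitly, and use the hypothesis $|A|,|B|\ge 2$ only to guarantee that the arcs are well defined, as in Lemma \ref{L:1}.
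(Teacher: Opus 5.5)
Your argument is correct and is essentially the paper's: as in the proof of Lemma \ref{L:1}, you identify $x=f(b_p)$ and $y=f(b_{p+1})$ and read off the admissible shifts from where the cut point $t+1-k$ falls, and your cut-point analysis also supplies the ``only if'' direction that the paper leaves as immediate. Two harmless slips: when $t+1-k\in(\max f(A),t]$, the $B$-labels preceding $A$ should be taken to be the non-wrapped ones $\le x$, since $[t+1-k,t]$ may contain no $B$-label; and the inequality $t-\max f(A)<t+1-y$ in your last paragraph is reversed, though $0\notin[t+1-y,t-\max f(A)]$ still holds simply because $y\le t$.
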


It is also easy to determine when the shift of a uniformly ordered labeling is uniformly ordered.
\begin{lemma}  \label{L:2}
  Let $G$ be a bipartite graph with vertex bipartition $\{A,B\}$, where $|A|,|B|\ge 2$. Let $f\colon A\cup B\rightarrow [0,t]$ be an $(A,B,t)$-uniformly ordered labeling of $G$, with $t\in\mathbb N$. Then:
\begin{enumerate}
  \item the labeling $f_k$ is a $(A,B,t)$-uniformly ordered labeling of $G$ if and only if $k\in [0,t-\max f(B)] \cup [t+1-\min f(A),t]$, in the case that $\min f(A)\ge 1$, or $k\in [0,t-\max f(B)]$, in the case that $\min f(A)=0$;
    \item the labeling $f_k$ is a $(B,A,t)$-uniformly ordered labeling of $G$ if and only if $k\in [t+1-\min f(B),t-\max f(A)]$.
  \end{enumerate}
\end{lemma}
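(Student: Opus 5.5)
The plan is a direct computation. Since $\widetilde{f_k}=\tilde f$ for every $k$, conditions (a), (b) and (c) are automatic for every shift, so only condition (4) has to be checked for $f_k$. I will first record how a shift acts on a single label: $f_k(v)=f(v)+k$ if $f(v)\le t-k$, and $f_k(v)=f(v)+k-(t+1)$ if $f(v)\ge t+1-k$. So a shift by $k$ moves every label in $[0,t-k]$ up by $k$ and wraps every label in $[t+1-k,t]$ down to $[0,k-1]$. Because $f$ is $(A,B,t)$-uniformly ordered, all labels of $A$ lie below all labels of $B$, and the positions of the labels on the circle $\mathbb Z_{t+1}$ form two arcs: $A$ occupies $[\min f(A),\max f(A)]$ and $B$ occupies $[\min f(B),\max f(B)]$, with $\max f(A)<\min f(B)$.

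For part (1) I will split on where the wrap point $t+1-k$ falls.

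If $k\le t-\max f(B)$, nothing wraps, all labels are translated by $k$, and the order is preserved. Hence $f_k$ is $(A,B,t)$-uniformly ordered.

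If $t+1-k\le \min f(A)$, i.e.\ $k\ge t+1-\min f(A)$, everything wraps, which is again a uniform translation, this time by $k-(t+1)$, so order is preserved. This range is nonempty within $[0,t]$ exactly when $\min f(A)\ge 1$. When $\min f(A)=0$, the only candidate would be $k=t+1$, which is excluded.

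In every other case the wrap point cuts strictly inside the union of the two arcs, or between them. I need to show that some $B$-label then ends up below some $A$-label, or some $A$-label ends up above some $B$-label. There are three sub-cases:
\begin{itemize}
\item The cut falls in the gap or inside the $B$-arc with at least one $B$-label wrapped. That $B$-label (wrapped, so it lands in $[0,k-1]$) becomes smaller than every unwrapped $A$-label (which lands at $\ge k$). All $A$-labels are unwrapped, since they lie below the cut. This violates (4).
\item The cut falls inside the $A$-arc. Some $A$-label wraps to a value $<k$, while $\max f(B)$ is unwrapped and becomes $\ge k$. At the same time an unwrapped $A$-label is larger than the wrapped one. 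That alone is harmless, but the wrapped $A$-label is now below the unwrapped $B$-labels, and the unwrapped $A$-labels are also below them. So I must look more carefully: the wrapped $A$-label lands at $f(a)+k-t-1$, and all $B$-labels land above the unwrapped $A$-labels. Actually the true obstruction is that the wrapped $A$-label now lies above nothing in $B$ unless $B$ wraps too. This case needs care.
\end{itemize}

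The key observation that settles the second sub-case is this: after a partial wrap, the cyclic order of the labels is unchanged, but the linear order is a rotation of it. A rotation of the pattern $A\ldots AB\ldots B$ is again of the form "all of one class below all of the other" only when the cut lies exactly between the two arcs, or exactly at the ends.

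Part (2) follows the same reasoning. $f_k$ is $(B,A,t)$-uniformly ordered precisely when the cut falls in the gap $[\max f(A)+1,\min f(B)]$, i.e.\ $t+1-k\in[\max f(A)+1,\min f(B)]$, which is equivalent to $k\in[t+1-\min f(B),t-\max f(A)]$. In that case the $B$-labels wrap to the bottom and the $A$-labels move up above them. Any other cut either leaves an $A$-label below a $B$-label, or splits an arc.

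The main obstacle is stating this rotation argument cleanly, covering all cuts, including the cases where the cut hits an endpoint of an arc or where a class has labels at $0$ or $t$. The hypothesis $|A|,|B|\ge 2$ is what guarantees that a cut inside an arc genuinely separates two labels of the same class. That separation produces the pattern $a<b<a'$ or $b<a<b'$, which contradicts (4) for both orderings $(A,B)$ and $(B,A)$. I will phrase the proof in terms of this "interleaving" obstruction, mirroring the proof of Lemma \ref{L:1}.
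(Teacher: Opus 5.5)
Your strategy of locating the wrap point $c=t+1-k$ relative to $p_1=\min f(A)<p_2=\max f(A)<q_1=\min f(B)<q_2=\max f(B)$ is the right one. It is exactly the computation the paper's one-line proof leaves to the reader. Your closing rotation observation also yields both parts: reading the labels of $f_k$ in increasing order, the classes form a cyclic rotation of the word $A\cdots AB\cdots B$, and such a rotation splits into two monochromatic blocks only when the number of labels below the cut is $0$, $|A|$ or $|A|+|B|$.

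However, your second sub-case is wrong as written, and the difficulty you see there comes from that error. If $p_1<c\le p_2$, every $B$-label is $\ge q_1>p_2\ge c$, so \emph{all} of $B$ wraps. Your claim that $\max f(B)$ stays unwrapped has the direction reversed: everything at or above the cut wraps, and $B$ lies above $A$. Take $a_1,a_2\in A$ with $f(a_1)=p_1$ and $f(a_2)=p_2$. For every $b\in B$ you get $f_k(b)\le k-1<k\le f_k(a_1)$ and $f_k(a_2)=p_2+k-t-1<q_1+k-t-1\le f_k(b)$. So (4) fails for both orderings, just as immediately as in your first sub-case, and no extra care is needed. Also, $|A|,|B|\ge 2$ is not what makes this case work. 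If a class is a singleton, the corresponding arc-splitting range of $c$ is simply empty, and the lemma still holds.

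A further point comes from the paper's remark that $\widetilde{f_k}=\tilde f$, which you rely on to dismiss (b) and (c). That remark is not literally true. For an edge $\{u,v\}$ with $f(u)\le t-k<f(v)$ one has $|f_k(u)-f_k(v)|=t+1-|f(u)-f(v)|$. For the `if' directions you still need (b) and (c) for $f_k$, and the clean justification is as follows:
\begin{itemize}
\item For $k$ in the ranges of part (1), either no label or every label wraps, so $\widetilde{f_k}=\tilde f$.
\item For $k$ in the range of part (2), all of $B$ wraps and none of $A$ does. So every edge crosses the cut and $\widetilde{f_k}=t+1-\tilde f$.
\item Conditions (b) and (c) are invariant under $d\mapsto t+1-d$.
\end{itemize}
With these two corrections your proposal is a complete proof along the same lines as the paper.
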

\begin{proof}
  Let $p_1=\min f(A)$, $p_2=\max f(A)$, $q_1=\min f(B)$ and $q_2=\max f(B).$ Then by the definition of $(A,B,t)$-uniformly ordered we have $p_1<p_2<q_1<q_2$. Then the statement follows easily.
\end{proof}

\section{Labelings of alternating type}

In this section we state some key lemmas for the main results of this paper. We start with the following, whose proof is immediate.

\begin{lemma} \label{L:3}
  Let $r,s\in\mathbb N$, with $r\ge 2s+1$, and let $a_1,\dots,a_{2s}\in [1,r-1]$ such that $a_i<a_{i+1}$ for $i=1.\dots,2s-1$. Let:
\[
  x_i=\begin{cases}
      a_1 & \text{for }i=1\\[2ex]
      \displaystyle{\sum_{j=0}^{i-1}a_{2j+1} - \sum_{j=1}^{i-1}a_{2j}} & \text{for }i=2,\dots,s
    \end{cases}
\]
and 
  \[
  y_i=\sum_{j=0}^{i-1} a_{2j+1}-\sum_{j=1}^i a_{2j}+r,
\]
for $i=1,\dots,s$. Then $0<x_1<\dots<x_s<y_s<\dots<y_1<r$ and the residue classes of $0,x_1,\dots,x_s,y_1,\dots,y_s\in\mathbb Z_r$ are pairwise different. 
\end{lemma}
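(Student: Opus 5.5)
The approach is to rewrite the $x_i$ and $y_i$ through consecutive differences $d_j=a_{j+1}-a_j\ge 1$ and compare adjacent terms directly. Note first that the formula for $x_i$ with $i\ge 2$ also gives $a_1$ when $i=1$ (the second sum is empty), so a single formula can be used throughout. The key identities are
\[
x_{i+1}-x_i=a_{2i+1}-a_{2i}>0,\qquad y_i-y_{i+1}=a_{2i+2}-a_{2i+1}>0,
\]
for $i=1,\dots,s-1$, together with
\[
y_i-x_i=r-a_{2i}.
\]
The first two give $x_1<\dots<x_s$ and $y_s<\dots<y_1$. The third, applied with $i=s$, gives $y_s-x_s=r-a_{2s}\ge 1$ because $a_{2s}\le r-1$.

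For the outer bounds, $x_1=a_1\ge 1>0$. For the other end, write
\[
y_1=a_1-a_2+r=r-(a_2-a_1)<r.
\]
This yields the full chain $0<x_1<\dots<x_s<y_s<\dots<y_1<r$.

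For distinctness modulo $r$, all $2s+1$ numbers $0,x_1,\dots,x_s,y_s,\dots,y_1$ lie in $[0,r-1]$ and are strictly increasing in the order listed. Hence their residues in $\mathbb Z_r$ are automatically pairwise distinct. The hypothesis $r\ge 2s+1$ is consistent with there being $2s+1$ distinct values, but it is not otherwise needed. It also makes the assumption $a_1,\dots,a_{2s}\in[1,r-1]$ satisfiable.

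The only step needing care is the telescoping computation of $x_{i+1}-x_i$ and $y_i-y_{i+1}$, specifically the index bookkeeping in the alternating sums, where one has to check which $a_j$ survive. Everything else is immediate.
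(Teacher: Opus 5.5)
Your proof is correct. The identities $x_{i+1}-x_i=a_{2i+1}-a_{2i}$, $y_i-y_{i+1}=a_{2i+2}-a_{2i+1}$ and $y_i-x_i=r-a_{2i}$, together with $x_1=a_1\ge 1$, $y_1=r-(a_2-a_1)<r$, and distinctness of the residues because all $2s+1$ values are strictly increasing in $[0,r-1]$, are exactly the direct verification the paper leaves to the reader when it calls this proof immediate (it gives no written argument).
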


\begin{remark}
  Let $r,s\in\mathbb N$, with $r\ge 2s+1$, and $a_1,\dots,a_{2s}$ such that $a_i<a_{i+1}$ for $i=1,\dots,s-1$. Let $b_i=\min\{a_i,r-a_i\}$ for $i=1,\dots,2s$ and suppose that $b_i\ne b_j$ for any $i\ne j$. Let $x_1,\dots,x_s,y_1,\dots,y_s$ defined as above. Then in the path:
\[
  <0,x_1,y_1,x_2,y_2,\dots,y_{s-1},x_s,y_s>
\]
  the differences in $\mathbb Z_r$ are exactly $b_1,\dots,b_{2s}$.
\end{remark}

\begin{corollary} \label{C:1}
  Let $r,s\in \mathbb N$, with $r\ge 2s+1$, and let $a_1,\dots,a_{2s}\in\mathbb [1,r-1]$ such that $a_i<a_{i+1}$ for $i=1,\dots,2s-1$.  Let:
\[
  x_i=\begin{cases}
      a_1 & \text{for }i=1\\[2ex]
      \displaystyle{\sum_{j=0}^{i-1}a_{2j+1} - \sum_{j=1}^{i-1}a_{2j}} & \text{for }i=2,\dots,s
    \end{cases}
\]
and 
  \[
  y_i=\sum_{j=0}^{i-1} a_{2j+1}-\sum_{j=1}^i a_{2j}+r,
\]
for $i=1,\dots,s$. Let $G$ be a bipartite graph with vertex bipartition $\{A,B\}$ such that $|A|=s$ and $|B|=s+1$ and let $f\colon A\cup B\rightarrow [0,r-1]$ be a $\overline{\rho}$-labeling such that  $f(A)=\{x_1-1,\dots,x_s-1\}$ and $f(B)=\{y_1-1,\dots,y_s-1,r-1\}$. Then:
\begin{enumerate}
  \item $f_k$ is a $(A,B,r-1)$-uniformly ordered labeling of $G$ if and only if $k\in \{0\}\cup [r+1-a_1,r-1]$ for $a_1\ge 2$, and $k=0$ for $a_1=1$;
  \item $f_{k}$ is $(B,A,r-1)$-uniformly ordered if and only if $k\in[r+1-y_s,r-x_s]$.
\end{enumerate}
\end{corollary}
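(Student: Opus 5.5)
The plan is to reduce everything to Lemma \ref{L:2} and Lemma \ref{L:3}. First I will record the ordering of the labels. By Lemma \ref{L:3} we have $0<x_1<\dots<x_s<y_s<\dots<y_1<r$. Hence in $[0,r-1]$ the labels of $f$ satisfy
\[
\min f(A)=x_1-1=a_1-1,\quad \max f(A)=x_s-1<\min f(B)=y_s-1,\quad \max f(B)=r-1 .
\]
Since $f$ is assumed to be a $\overline{\rho}$-labeling, this shows that $f$ itself is $(A,B,r-1)$-uniformly ordered, with $\lambda=x_s-1$. We also have $|A|=s$ and $|B|=s+1$, so Lemma \ref{L:2} applies with $t=r-1$ once $s\ge 2$. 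When $s=1$ the hypothesis $|A|\ge2$ fails. In that case I would redo the short verification directly, since the proof of Lemma \ref{L:2} only uses $p_1\le p_2<q_1<q_2$.

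For part (1), Lemma \ref{L:2}(1) gives the admissible $k$ as $[0,t-\max f(B)]\cup[t+1-\min f(A),t]$ when $\min f(A)\ge1$, and as $[0,t-\max f(B)]$ when $\min f(A)=0$. Here $t-\max f(B)=(r-1)-(r-1)=0$, so the first interval is $\{0\}$. Also $\min f(A)=a_1-1\ge1$ exactly when $a_1\ge2$. In that case
\[
t+1-\min f(A)=r-(a_1-1)=r+1-a_1 ,
\]
so the second interval is $[r+1-a_1,r-1]$. If $a_1=1$, then $\min f(A)=0$ and only $k=0$ survives. This is exactly the statement of (1).

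For part (2), Lemma \ref{L:2}(2) gives $k\in[t+1-\min f(B),t-\max f(A)]$. This equals $[r-(y_s-1),(r-1)-(x_s-1)]=[r+1-y_s,r-x_s]$, as claimed.

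The only step needing care is the bookkeeping of the $-1$ shift between the values $x_i,y_i$ and the labels, together with the degenerate case $s=1$. That case has to be checked by hand, although the argument is identical to the proof of Lemma \ref{L:2}. Everything else follows by substitution.
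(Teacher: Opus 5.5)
Your proposal is correct and follows the paper's own argument, which consists of citing Lemma~\ref{L:2} and Lemma~\ref{L:3}. You use Lemma~\ref{L:3} for the ordering $x_s-1<y_s-1$ and $\max f(B)=r-1$, then substitute into Lemma~\ref{L:2} with $t=r-1$ and $\min f(A)=a_1-1$. Your separate treatment of $s=1$, where the hypothesis $|A|\ge 2$ of Lemma~\ref{L:2} fails but its proof still goes through with $p_1=p_2$, is extra care that the paper omits.
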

\begin{proof}
  The proof follows by Lemma \ref{L:2} and Lemma \ref{L:3}. 
\end{proof}

It is immediate the following:
\begin{corollary} \label{C:2}
  Let $r,s\in \mathbb N$, with $r\ge 2s$, and let $a_1,\dots,a_{2s-1}\in\mathbb [1,r-1]$ such that $a_i<a_{i+1}$ for $i=1,\dots,2s-2$, if $s\ge 2$.  Let:
\[
  x_i=\begin{cases}
      a_1 & \text{for }i=1\\[2ex]
      \displaystyle{\sum_{j=0}^{i-1}a_{2j+1} - \sum_{j=1}^{i-1}a_{2j}} & \text{for }i=2,\dots,s, \text{ if }s\ge 2
    \end{cases}
\]
and for $s\ge 2$
  \[
  y_i=\sum_{j=0}^{i-1} a_{2j+1}-\sum_{j=1}^i a_{2j}+r,
\]
for $i=1,\dots,s-1$. Let $G$ be a bipartite graph with vertex bipartition $\{A,B\}$ such that $|A|=s$ and $|B|=s$ and let $f\colon A\cup B\rightarrow [0,r-1]$ be a $\overline{\rho}$-labeling such that $f(A)=\{x_1-1,\dots,x_s-1\}$ and $f(B)=\{y_1-1,\dots,y_{s-1}-1,r-1\}$. Then:
\begin{enumerate}
  \item $f_{k}$ is a $(A,B,r-1)$-uniformly ordered labeling of $G$ if and only if $k\in \{0\}\cup [r+1-a_1,r-1]$ for $a_1\ge 2$, and $k=0$ for $a_1=1$;
  \item $f_{k}$ is $(B,A,r-1)$-uniformly ordered if and only if $k\in[r+1-y_{s-1},r-x_s]$.
\end{enumerate}
\end{corollary}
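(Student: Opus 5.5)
The plan is to reduce Corollary \ref{C:2} to Lemma \ref{L:2}, as was done for Corollary \ref{C:1}. First I need the ordering of the labels, which is a truncated version of Lemma \ref{L:3}. I will apply the computation behind Lemma \ref{L:3} to the $2s-1$ values $a_1<\dots<a_{2s-1}$, i.e.\ drop the last partial sum $y_s$. The telescoping argument is the same: $x_{i+1}-y_i=a_{2i+1}-r<0$ and $y_i-x_i=r-a_{2i}>0$, $x_{i+1}-x_i=a_{2i+1}-a_{2i}>0$, $y_i-y_{i+1}=a_{2i+2}-a_{2i+1}>0$. Together with $x_1=a_1\ge 1$, this gives
\[
0<x_1<\dots<x_s<y_{s-1}<\dots<y_1<r .
\]
To get $x_s<y_{s-1}$ I use $x_s-y_{s-1}=a_{2s-1}-r<0$. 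For $s=1$ there are no $y$'s, and the chain reduces to $0<x_1<r$.

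Next I identify the extreme labels. Subtracting $1$, we get $\min f(A)=x_1-1=a_1-1$ and $\max f(A)=x_s-1$. Since $x_s<y_{s-1}$, also $\min f(B)=y_{s-1}-1$ (or $r-1$ when $s=1$) and $\max f(B)=r-1=t$, with $t=r-1$. So every label of $A$ is below every label of $B$, and since $f$ is a $\overline{\rho}$-labeling, $f$ itself is $(A,B,r-1)$-uniformly ordered. The hypotheses $|A|,|B|\ge2$ of Lemma \ref{L:2} hold when $s\ge2$. For $s=1$ I will verify the claim directly on the two labels $a_1-1<r-1$, where the same interval computation applies.

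I then apply Lemma \ref{L:2}(1) with $t=r-1$. The interval $[0,t-\max f(B)]$ becomes $\{0\}$. The interval $[t+1-\min f(A),t]$ becomes $[r-(a_1-1),r-1]=[r+1-a_1,r-1]$, and it is present exactly when $\min f(A)=a_1-1\ge1$, i.e.\ $a_1\ge2$. When $a_1=1$ only $k=0$ remains. Lemma \ref{L:2}(2) gives the interval $[t+1-\min f(B),t-\max f(A)]=[r+1-y_{s-1},r-x_s]$, which is exactly statement (2).

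The only step needing care is the order check $x_s<y_{s-1}$, which replaces the pair $x_s<y_s$ from Lemma \ref{L:3}. It requires $a_{2s-1}<r$, and this is guaranteed by $a_{2s-1}\in[1,r-1]$. Everything else is a direct substitution into Lemma \ref{L:2}.
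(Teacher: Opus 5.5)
Your proof is correct and matches the paper's route: the paper gives no separate proof here, treating this corollary as the analogue of the preceding one (the case $|B|=s+1$), which follows from Lemma \ref{L:2} and Lemma \ref{L:3}. You carry out exactly that reduction, re-deriving the truncated ordering $0<x_1<\dots<x_s<y_{s-1}<\dots<y_1<r$ and substituting the extreme labels into Lemma \ref{L:2} with $t=r-1$. Two small points: the step $y_1<r$ comes from $r-y_1=a_2-a_1>0$, which you left implicit, and for $s=1$ statement (2) only makes sense under the convention $y_0=r$, which is what your choice $\min f(B)=r-1$ amounts to.
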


Let $G$ be a bipartite graph and let $r,n\in\mathbb N$, with $r\ge n+1$. Let $a_1,\dots,a_{n}\in  [1,r-1]$ such that $a_1<\dots<a_{n}$. Let:
\[
\underline a= \begin{cases}
  (a_1,-a_2,a_3,\dots,a_{n-1},-a_n) & \text{ for $n$ even}\\
  (a_1,-a_2,a_3,\dots,-a_{n-1},a_n)  & \text{ for $n$ odd}.
\end{cases}
\]
A labeling $f$ as either in Corollary \ref{C:1} (so the labelings are $r-1,x_1-1,y_1-1,\dots,x_s-1,y_s-1$) or Corollary \ref{C:2} (so the labelings are $r-1,x_1-1,y_1-1,\dots,x_{s-1}-1,y_{s-1}-1,x_s-1$) is said to be \emph{an alternating labeling of type $\underline a$} and its $k$-shift labelings are said to be of type $(\underline a,k)$ for any $k\in\mathbb N$.

In what follows, we state some results, analogous to those given above.

\begin{lemma} \label{L:4}
  Let $r,s\in\mathbb N$, with $r\ge 2s+1$, and let $a_1,\dots,a_{2s}\in [1,r-1]$ such that $a_i<a_{i+1}$ for $i=1.\dots,2s-1$. Let:
\[
  x_i=  \displaystyle{\sum_{j=1}^{i}a_{2j} - \sum_{j=0}^{i-1}a_{2j+1}}  \quad \text{for }i=1,\dots,s
\]
and
  \[
    y_i= \begin{cases}
      r-a_1 & \text{for }i=1\\[2ex]
      r+\displaystyle{\sum_{j=1}^{i-1}a_{2j} - \sum_{j=0}^{i-1}a_{2j+1}} & \text{for }i=2,\dots,s.
    \end{cases}
\]
Then $0<x_1<\dots<x_s<y_s<\dots<y_1<r$ and the residue classes of $0,x_1,\dots,x_s,y_1,\dots,y_s\in\mathbb Z_r$ are pairwise different.
\end{lemma}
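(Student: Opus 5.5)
We argue directly from the definitions, using telescoping differences, exactly as one would for Lemma \ref{L:3}. One could also try to deduce the statement from Lemma \ref{L:3} by a reflection, but the direct route is cleaner.

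\textbf{Step 1: consecutive differences.} Put $x_0=0$. We compute
\[
y_i-x_{i-1}=r-a_{2i-1},\qquad x_i-y_i=a_{2i}-r \quad (i=1,\dots,s).
\]
Indeed, $y_i=x_{i-1}-a_{2i-1}+r$ and $x_i=y_i+a_{2i}-r$, and the case $i=1$, where $y_1=r-a_1$, matches the formula with $x_0=0$. Consequently
\[
x_i-x_{i-1}=a_{2i}-a_{2i-1}>0
\]
for $i=1,\dots,s$. This gives $0<x_1<\dots<x_s$, and $x_1=a_2-a_1>0$.

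\textbf{Step 2: the $y_i$ decrease.} Next,
\[
y_{i+1}-y_i=a_{2i}-a_{2i+1}<0,
\]
so $y_s<\dots<y_1=r-a_1<r$, since $a_1\ge1$.

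\textbf{Step 3: the middle inequality $x_s<y_s$.} We have $y_s-x_s=r-a_{2s}>0$, because $a_{2s}\le r-1$.

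\textbf{Step 4: distinct residues.} Steps 1--3 give the chain
\[
0<x_1<\dots<x_s<y_s<\dots<y_1<r.
\]
Thus $0,x_1,\dots,x_s,y_1,\dots,y_s$ are $2s+1$ distinct integers in $[0,r-1]$. Distinct integers in $[0,r-1]$ have distinct residue classes modulo $r$, and $r\ge 2s+1$ makes this count consistent. Hence the residues are pairwise different.

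The only point needing care is the indexing in the sums, in particular checking that the case $i=1$ of $y_i$ agrees with the general telescoping formula. Beyond that the argument is routine.
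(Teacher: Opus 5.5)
Your proof is correct: the identities $y_i-x_{i-1}=r-a_{2i-1}$ and $x_i-y_i=a_{2i}-r$ (with $x_0=0$, which also covers $y_1=r-a_1$) give the whole chain $0<x_1<\dots<x_s<y_s<\dots<y_1<r$, and distinct residues follow because all $2s+1$ values are distinct integers in $[0,r-1]$. The paper gives no written proof of this lemma, presenting it as analogous to Lemma~\ref{L:3}, whose proof it calls immediate, and your telescoping argument is exactly the routine verification it leaves to the reader.
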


\begin{remark}
  Let $r,s\in\mathbb N$, with $r\ge 2s+1$, and $a_1,\dots,a_{2s}$ such that $a_i<a_{i+1}$ for $i=1,\dots,s-1$. Let $b_i=\min\{a_i,r-a_i\}$ for $i=1,\dots,2s$ and suppose that $b_i\ne b_j$ for any $i\ne j$. Let $x_1,\dots,x_s,y_1,\dots,y_s$ defined as in Lemma \ref{L:4}. Then in the path:
\[
  <0,y_1,x_1,y_2,x_2,\dots,x_{s-1},y_s,x_s>
\]
  the differences in $\mathbb Z_r$ are exactly $b_1,\dots,b_{2s}$.
\end{remark}

We immediately get the following two corollaries, analogous to Corollary \ref{C:1} and Corollary \ref{C:2}:

\begin{corollary} \label{C:3}
  Let $r,s\in \mathbb N$, with $r\ge 2s+1$, and let $a_1,\dots,a_{2s}\in\mathbb [1,r-1]$ such that $a_i<a_{i+1}$ for $i=1,\dots,2s-1$.  Let:
\[
  x_i=  \displaystyle{\sum_{j=1}^{i}a_{2j} - \sum_{j=0}^{i-1}a_{2j+1}}  \quad \text{for }i=1,\dots,s
\]
and
  \[
    y_i= \begin{cases}
      r-a_1 & \text{for }i=1\\[2ex]
      r+\displaystyle{\sum_{j=1}^{i-1}a_{2j} - \sum_{j=0}^{i-1}a_{2j+1}} & \text{for }i=2,\dots,s.
    \end{cases}
\]
Let $G$ be a bipartite graph with vertex bipartition $\{A,B\}$ such that $|A|=s+1$ and $|B|=s$ and let $f\colon A\cup B\rightarrow [0,r-1]$ be a $\overline{\rho}$-labeling such that  $f(A)=\{0,x_1,\dots,x_s\}$ and $f(B)=\{y_1,\dots,y_s\}$. Then:
\begin{enumerate}
  \item $f_k$ is a $(A,B,r-1)$-uniformly ordered labeling of $G$ if and only if $k\in [0,a_1-1]$;
  \item $f_{k}$ is $(B,A,r-1)$-uniformly ordered if and only if $k\in[r-y_s,r-1-x_s]$.
\end{enumerate}
\end{corollary}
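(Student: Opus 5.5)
The plan is to follow the pattern of Corollary \ref{C:1}: apply Lemma \ref{L:4} to get the ordering of the labels, then apply Lemma \ref{L:2} with $t=r-1$.

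First I read off the relevant extreme values. By Lemma \ref{L:4} we have $0<x_1<\dots<x_s<y_s<\dots<y_1<r$. Since $f(A)=\{0,x_1,\dots,x_s\}$ and $f(B)=\{y_1,\dots,y_s\}$, this gives
\[
\min f(A)=0,\quad \max f(A)=x_s,\quad \min f(B)=y_s,\quad \max f(B)=y_1=r-a_1.
\]
It follows that every label of $A$ is below every label of $B$. Taking $\lambda=x_s$ shows condition $(4)$ holds. Conditions $(a)$--$(c)$ hold because $f$ is assumed to be a $\overline{\rho}$-labeling. Hence $f=f_0$ is $(A,B,r-1)$-uniformly ordered. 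We also have $|A|=s+1\ge 2$, and I need $|B|=s\ge 2$ to apply Lemma \ref{L:2} directly.

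Next I apply Lemma \ref{L:2} with $t=r-1$.

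\textbf{Part (1).} Since $\min f(A)=0$, the admissible shifts are $k\in[0,t-\max f(B)]=[0,r-1-(r-a_1)]=[0,a_1-1]$. This is exactly the claimed interval.

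\textbf{Part (2).} The admissible shifts are $k\in[t+1-\min f(B),\,t-\max f(A)]=[r-y_s,\,r-1-x_s]$, which matches the statement.

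The only real obstacle is the degenerate case $s=1$, where $|B|=1$ and Lemma \ref{L:2} does not formally apply. I would handle it by reproducing the short argument behind Lemma \ref{L:2}: the lemma's conclusion depends only on the position of the four values $p_1\le p_2<q_1\le q_2$ under a cyclic shift modulo $r$, and that reasoning is unaffected if $q_1=q_2$. A shift by $k$ keeps $A$ below $B$ exactly when no label wraps around, or when only the labels of $A$ wrap. With $\min f(A)=0$, no $A$-label can wrap first, so only the first interval occurs.

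For part (2), $B$ must wrap to the bottom while $A$ stays put. This requires $y_s+k\ge r$ and $x_s+k\le r-1$, which reproduces the same intervals. With that remark the corollary follows immediately.
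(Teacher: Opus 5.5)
Your proposal is correct and is the argument the paper intends: Lemma \ref{L:4} gives the ordering and the extreme values $\min f(A)=0$, $\max f(A)=x_s$, $\min f(B)=y_s$, $\max f(B)=r-a_1$, and both parts then follow from Lemma \ref{L:2} with $t=r-1$. Your separate check of the case $s=1$, where $|B|=1$, is a sound addition, since the paper uses that case for short pendant paths in Theorem \ref{T:3}; one correction there is that ``only the labels of $A$ wrap'' should read ``all labels wrap'', because under a shift by $k$ the labels that wrap are always the largest ones.
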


\begin{corollary} \label{C:4}
  Let $r,s\in \mathbb N$, with $r\ge 2s$, and let $a_1,\dots,a_{2s-1}\in\mathbb [1,r-1]$ such that $a_i<a_{i+1}$ for $i=1,\dots,2s-2$, if $s\ge 2$.  Let:
\[
  x_i=  \displaystyle{\sum_{j=1}^{i}a_{2j} - \sum_{j=0}^{i-1}a_{2j+1}}  \quad \text{for }i=1,\dots,s-1,  \text{ if }s\ge 2
\]
and
  \[
    y_i= \begin{cases}
      r-a_1 & \text{for }i=1\\[2ex]
      r+\displaystyle{\sum_{j=1}^{i-1}a_{2j} - \sum_{j=0}^{i-1}a_{2j+1}} & \text{for }i=2,\dots,s, \text{ if }s\ge 2.
    \end{cases}
\]
Let $G$ be a bipartite graph with vertex bipartition $\{A,B\}$ such that $|A|=s$ and $|B|=s$ and let $f\colon A\cup B\rightarrow [0,r-1]$ be a $\overline{\rho}$-labeling such that  $f(A)=\{0,x_1,\dots,x_{s-1}\}$ and $f(B)=\{y_1,\dots,y_s\}$. Then:
\begin{enumerate}
  \item $f_k$ is a $(A,B,r-1)$-uniformly ordered labeling of $G$ if and only if $k\in [0,a_1-1]$;
  \item $f_{k}$ is $(B,A,r-1)$-uniformly ordered if and only if $k\in[r-y_s,r-1-x_{s-1}]$.
\end{enumerate}
\end{corollary}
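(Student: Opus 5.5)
The plan is to reduce the statement to Lemma \ref{L:2}. To do that, we first check directly that $f$ itself is $(A,B,r-1)$-uniformly ordered, in the spirit of Lemma \ref{L:4}. Throughout we set $t=r-1$.

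\textbf{Step 1: the ordering of the labels.} We aim to show
\[
0<x_1<\dots<x_{s-1}<y_s<\dots<y_1<r.
\]
Each inequality is a single difference of consecutive terms of the increasing sequence $a_1<\dots<a_{2s-1}$.
\begin{itemize}
\item $x_1=a_2-a_1>0$.
\item $x_i-x_{i-1}=a_{2i}-a_{2i-1}>0$ for $2\le i\le s-1$.
\item $y_i-y_{i+1}=a_{2i+1}-a_{2i}>0$ for $1\le i\le s-1$.
\item $y_1=r-a_1<r$.
\item The middle comparison is $y_s-x_{s-1}=r-a_{2s-1}>0$, which holds because $a_{2s-1}\le r-1$.
\end{itemize}
When $s=1$ the only claim is $0<y_1=r-a_1<r$, which is immediate. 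Hence, with $\lambda=x_{s-1}$ (or $\lambda=0$ if $s=1$), every label in $A$ is at most $\lambda$ and every label in $B$ exceeds $\lambda$. Since $f$ is assumed to be a $\overline{\rho}$-labeling, $f$ is $(A,B,r-1)$-uniformly ordered. Moreover
\[
\min f(A)=0,\quad \max f(A)=x_{s-1},\quad \min f(B)=y_s,\quad \max f(B)=y_1=r-a_1.
\]

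\textbf{Step 2: apply Lemma \ref{L:2}.} For part (1), since $\min f(A)=0$, Lemma \ref{L:2}(1) gives that $f_k$ is $(A,B,r-1)$-uniformly ordered exactly when
\[
k\in[0,t-\max f(B)]=[0,(r-1)-(r-a_1)]=[0,a_1-1].
\]
For part (2), Lemma \ref{L:2}(2) gives that $f_k$ is $(B,A,r-1)$-uniformly ordered exactly when
\[
k\in[t+1-\min f(B),\,t-\max f(A)]=[r-y_s,\,r-1-x_{s-1}].
\]
This is precisely the statement.

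\textbf{Main obstacle: small $s$.} Lemma \ref{L:2} is stated under the hypothesis $|A|,|B|\ge 2$, but for $s=1$ we have $|A|=|B|=1$. I would handle this either by inspecting the proof of Lemma \ref{L:2}, which only uses the chain $p_1\le p_2<q_1\le q_2$ (with possible equalities), or by checking $s=1$ directly. For $s=1$ the labels are $0$ and $r-a_1$. The shift keeps $0$ below $r-a_1$ exactly when no wrap-around occurs, that is for $k\le a_1-1$; for $k\ge 1$ the shifted label of $A$ is $k\ge1$, so $0$ is no longer a label of $A$ and the wrap-around case does not arise. The shift puts the $B$-label below the $A$-label exactly when $k\in[r-y_1,r-1]$, in agreement with the formula since we read $x_0=0$.

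For $s=1$ part (2) also needs the convention $x_{s-1}=x_0=0$, since $x_0$ is not defined in the statement. With that convention the direct check above gives both parts of the statement.
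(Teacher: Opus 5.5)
Your proposal is correct and follows the paper's intended route. The paper gives no separate proof; it only says the result is analogous to the earlier corollary proved ``by Lemma \ref{L:2} and Lemma \ref{L:3}''. That route is to establish $0<x_1<\dots<x_{s-1}<y_s<\dots<y_1<r$, which is the $(2s-1)$-difference analogue of Lemma \ref{L:4} and which you verify directly via $y_s-x_{s-1}=r-a_{2s-1}$, and then read off the shift intervals from Lemma \ref{L:2}.

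Your separate check of $s=1$ is a useful addition. That case lies outside the hypothesis $|A|,|B|\ge 2$ of Lemma \ref{L:2}, and it requires reading $x_0$ as the empty sum $0$; the paper leaves both points implicit.
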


Let $G$ be a bipartite graph end let $r,n\in\mathbb N$, with $r\ge n+1$. Let $a_1,\dots,a_{n}\in  [1,r-1]$ such that $a_1<\dots<a_{n}$. Let:
\[
\underline a= \begin{cases}
  (-a_1,a_2,-a_3,\dots,-a_{n-1},a_n) & \text{ for $n$ even}\\
  (-a_1,a_2,-a_3,\dots,a_{n-1},-a_n)  & \text{ for $n$ odd}.
\end{cases}
\]
A labeling $f$ as either in Corollary \ref{C:3} or Corollary \ref{C:4} is said to be \emph{an alternating labeling of type $\underline a$} and its $k$-shift labelings are said to be of type $(\underline a,k)$ for any $k\in\mathbb N$.

\begin{remark} \label{R:1}
Note that, given a labeling $f$ of alternating type (of any of the two types defined in this section) of a bipartite graph, we can replace $r$ with any integer $r'>r$ (in the definition of the $y_i$, too) and we get a labeling $f'$ of the same alternating type of $f$. Moreover, if $f$ is $(A,B,r-1)$-uniformly ordered, then $f'$ is  $(A,B,r'-1)$-uniformly ordered.
\end{remark}

\section{Unicyclic graph designs}

In this section we are going to provide uniformly ordered labelings for the graphs $C_{m,(n)}$ and $C_{m,(n_1,n_2)}$ for any $m,n,n_1,n_2\in\mathbb N$, with $m$ even.

\begin{prop}\label{P:1}
  Let $m\in\mathbb N$, with $m\equiv 0\mod 4$ and $m\ge 4$, and let $p,r\in\mathbb N$, with either $r\ge 2p+2m+1$ or $p+m+1\le r\le 2p+1$. Let:
  \[
    a_j=\begin{cases}
      p+j & \text{for } j=1,\dots,\dfrac{m}{2}-1\\
      p+j+1 & \text{for } j=\dfrac{m}{2},\dots,m-1.
      \end{cases}
  \]
  Then there exists, for a bipartition $\{A,B\}$, an $(A,B,r-1)$-uniformly ordered labeling $f\colon V(C_m)\rightarrow [0,r-1]$ of the vertices of the cycle $C_m$ of alternating type $\underline{a}=(a_1,-a_2,\dots,-a_{m-2},a_{m-1})$ with difference set $[p+1,p+m]$ and labelings
\[
  f(A)=\{p+i-1\mid i=1,\dots,\tfrac{m}{2}\}
\]
and
\[
  f(B)=\{r-i-1\mid i=1,\dots,\tfrac{m}{4}-1\}\cup\{r-i-2\mid i=\tfrac{m}{4},\dots,\tfrac{m}{2}-1\}\cup\{r-1\}.
  \]
\end{prop}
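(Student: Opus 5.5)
The plan is to obtain $f$ as the alternating labeling of Corollary \ref{C:2}, taking $s=\tfrac{m}{2}$ and the $2s-1=m-1$ values $a_1<\dots<a_{m-1}$ of the statement. The cycle will be the closed walk
\[
  <r-1,\ x_1-1,\ y_1-1,\ x_2-1,\ \dots,\ y_{s-1}-1,\ x_s-1,\ r-1>.
\]
The $x$-vertices form $A$ and the $y$-vertices together with $r-1$ form $B$; since $m$ is even this is a proper bipartition of $C_m$. The hypotheses of Corollary \ref{C:2} need checking first: $r\ge m=2s$, and $a_{m-1}=p+m\le r-1$. Both follow from $r\ge p+m+1$, which holds under either alternative on $r$.

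\emph{Step 1: compute the labels.} Use the recursions $x_1=a_1$, $x_i=x_{i-1}+(a_{2i-1}-a_{2i-2})$ and $y_i=x_i-a_{2i}+r$. Consecutive $a_j$ differ by $1$ except for the single jump of $2$ between $a_{m/2-1}$ and $a_{m/2}$. Because $m\equiv 0\pmod 4$, the index $\tfrac m2-1$ is odd, so this jump never occurs in a difference $a_{2i-1}-a_{2i-2}$. Hence $x_i=p+i$ for all $i$, which gives $f(A)=\{p+i-1\}$. For $y_i$, we have $a_{2i}=p+2i$ when $2i\le \tfrac m2-1$, i.e.\ $i\le \tfrac m4-1$, and $a_{2i}=p+2i+1$ otherwise. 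So $y_i=r-i$ for $i\le \tfrac m4-1$ and $y_i=r-i-1$ for $i\ge \tfrac m4$. Subtracting $1$ reproduces exactly the stated $f(B)$.

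\emph{Step 2: the edge labels.} The path edges have integer differences $x_1-1-(r-1)\mapsto r-a_1$, then $a_{2i-1}$ or $r-a_{2i}$ alternately; the exact pattern comes from the Remark after Lemma \ref{L:3}. The closing edge between $x_s-1=p+\tfrac m2-1$ and $r-1$ has difference $r-(p+\tfrac m2)$. Thus every edge label is either $d$ or $r-d$, with $d$ running over $\{a_1,\dots,a_{m-1}\}\cup\{p+\tfrac m2\}=[p+1,p+m]$, and each $d$ occurs exactly once. This is the sense in which the difference set is $[p+1,p+m]$.

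\emph{Step 3: the $\overline{\rho}$ conditions with $t=r-1$.} This is the step I expect to be the main obstacle. Conditions (b) and (c) both reduce to showing that the $2m$ numbers $d$ and $r-d$, for $d\in[p+1,p+m]$, are pairwise distinct and positive. (Note that (c) with $t+1=r$ says precisely that no $i$ and $r-i$ both occur.) If $r\ge 2p+2m+1$, then $r-d\ge p+m+1>p+m$, so the sets $\{d\}$ and $\{r-d\}$ are disjoint. If $p+m+1\le r\le 2p+1$, then $1\le r-d\le p<p+1$, again disjoint. I will also verify that $d\ne r-d$ in both regimes, so that no label is its own complement. Condition (a) follows from $r-1\ge 2m$, which holds in both regimes.

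Finally, $\max f(A)=p+\tfrac m2-1<\min f(B)=r-\tfrac m2-1$ because $r>p+m$. So (d) holds with $\lambda=p+\tfrac m2-1$, and $f=f_0$ is $(A,B,r-1)$-uniformly ordered, consistent with case $k=0$ of Corollary \ref{C:2}(1).
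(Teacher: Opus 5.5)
Your proposal is correct and follows the paper's own route: realize $f$ as the alternating labeling of Corollary \ref{C:2} with $s=\tfrac m2$, use the parity of $\tfrac m2-1$ to get $x_i=p+i$ and the two-piece formula for $y_i$, then check the $\overline{\rho}$ and ordering conditions, which the paper calls immediate and you verify explicitly. One harmless slip in Step 2: every integer edge difference along the cycle has the form $r-d$ (for instance $y_i-x_{i+1}=r-a_{2i+1}$, not $a_{2i+1}$; the alternating signs appear only modulo $r$), but Step 3 uses only that each edge label is $d$ or $r-d$ for a distinct $d\in[p+1,p+m]$, so the argument is unaffected.
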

\begin{proof}
  It is sufficient to give a labeling to the vertices of $C_m$ as in Corollary \ref{C:2}, with $2s=m$, in such a way that two consecutive labelings in the sequence $r-1,x_1-1,y_1-1,\dots,x_{m/2-1}-1,y_{m/2-1}-1,x_{m/2}-1$ are assigned to adjacent vertices. Clearly, this implies that the vertices with labelings $r-1$ and $x_{m/2}-1$ are adjacent and the statement follows by noting that:
\[
  x_i=p+i\quad \text{for }i=1,\dots,\dfrac{m}{2}
\]
and
\[
  y_i= \begin{cases}
    r-i & \text{ for }i=1,\dots,\dfrac{m}{4}-1\\[2ex]
    r-i-1 & \text{ for }i=\dfrac{m}{4},\dots,\dfrac{m}{2}-1,
  \end{cases}
\]
  since the other conditions are immediate.
\end{proof}

In the case $m\equiv 2\mod 4$ we have a slightly different result:
\begin{prop}\label{P:2}
  Let $m\in\mathbb N$, with $m\equiv 2\mod 4$ and $m\ge 6$, and let $p,r\in\mathbb N$, with either $r\ge 2p+2m+1$ or $p+m+2\le r\le 2p+1$. Let:
  \[
    a_j=\begin{cases}
      p+j & \text{for } j=1,\dots,\dfrac{m}{2}\\
      p+j+1 & \text{for } j=\dfrac{m}{2}+1,\dots,m-2\\
      p+j+2 & \text{for } j=m-1.
      \end{cases}
  \]
  Then there exists, for a bipartition $\{A,B\}$, an $(A,B,r-1)$-uniformly ordered labeling $f\colon V(C_m)\rightarrow [0,r-1]$ of the vertices of the cycle $C_m$ of type $\underline{a}=(a_1,-a_2,\dots,-a_{m-2},a_{m-1})$ with difference set $[p+1,p+m-1]\cup \{p+m+1\}$ and labelings
\[
  f(A)=\{p+i-1\mid i=1,\dots,\tfrac{m}{2}-1\}\cup\{p+\tfrac{m}{2}\}
\]
and
\[
  f(B)=\{r-i-1\mid i=1,\dots,\tfrac{m-2}{4}\}\cup\{r-i-2\mid i=\tfrac{m+2}{4},\dots,\tfrac{m}{2}-1\}\cup\{r-1\}.
  \]
\end{prop}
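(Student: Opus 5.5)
I will follow the proof of Proposition \ref{P:1}: place the labels of Corollary \ref{C:2} around the cycle. Take $s=\frac m2$ and use the $2s-1=m-1$ values $a_1<\dots<a_{m-1}$ from the statement. The labels are listed in the cyclic order
\[
r-1,\ x_1-1,\ y_1-1,\ \dots,\ x_{m/2-1}-1,\ y_{m/2-1}-1,\ x_{m/2}-1,
\]
and consecutive labels, together with the pair $r-1$, $x_{m/2}-1$, are assigned to adjacent vertices of $C_m$. The set $A$ consists of the vertices labelled $x_i-1$, and $B$ of those labelled $y_i-1$ together with $r-1$. Once Corollary \ref{C:2} applies, its case $k=0$ gives that $f$ is $(A,B,r-1)$-uniformly ordered. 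Its hypotheses are $r\ge 2s=m$ and $a_j\in[1,r-1]$; the binding one is $a_{m-1}=p+m+1\le r-1$, i.e. $r\ge p+m+2$, and both ranges for $r$ guarantee this.

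\textbf{Computing the labels.} Write $x_i=a_1+\sum_{j=1}^{i-1}(a_{2j+1}-a_{2j})$ and $y_i=x_i-a_{2i}+r$.

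Since $m\equiv 2\pmod 4$, the index $\frac m2$ is odd. So the first jump, between $a_{m/2}$ and $a_{m/2+1}$, goes from an odd to an even index and does not enter the differences $a_{2j+1}-a_{2j}$. Hence $x_i=p+i$ for $i\le\frac m2-1$. The last step $a_{m-1}-a_{m-2}=2$ gives $x_{m/2}=p+\frac m2+1$.

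For $y_i$ we have:
\begin{itemize}
\item if $2i\le\frac m2$, i.e. $i\le\frac{m-2}4$, then $a_{2i}=p+2i$ and $y_i=r-i$;
\item if $\frac{m+2}4\le i\le\frac m2-1$, then $a_{2i}=p+2i+1$ and $y_i=r-i-1$.
\end{itemize}
Subtracting $1$ gives exactly the sets $f(A)$ and $f(B)$ in the statement.

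\textbf{Edge differences.} The actual differences in $[0,r-1]$ are:
\begin{itemize}
\item $r-a_1$ for the edge $\{r-1,x_1-1\}$;
\item $r-a_{2i}$ for the edges $\{x_i-1,y_i-1\}$;
\item $r-a_{2i+1}$ for the edges $\{y_i-1,x_{i+1}-1\}$;
\item $r-x_{m/2}=r-(p+\frac m2+1)$ for the closing edge.
\end{itemize}
So the values $v$ with differences $r-v$ run over $\{a_1,\dots,a_{m-1}\}\cup\{p+\frac m2+1\}$. The single missing value $p+\frac m2+1$ among the $a_j$ is supplied precisely by the closing edge. This set is $[p+1,p+m+1]\setminus\{p+m\}$, which is the claimed difference set in the $\rho$ sense (up to $i\leftrightarrow r-i$), and injectivity of $\tilde f$ is immediate.

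\textbf{Main obstacle: condition (c) with $t=r-1$.} We must rule out two differences $r-v$ and $r-w$ summing to $r$, i.e. $v+w=r$ with $v\neq w$ in the set above. Such sums lie in $[2p+3,\,2p+2m]$: the maximum pairs $p+m+1$ with $p+m-1$, because $p+m$ is excluded. Also $r-v\ne v$ needs $r\neq 2v$, which again falls in this range. Thus either $r\ge 2p+2m+1$ or $r\le 2p+1$ excludes all collisions, which explains the two ranges of $r$ in the statement. The remaining conditions (a), (b), (d) then follow from Corollary \ref{C:2}.
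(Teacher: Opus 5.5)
Your proposal is the paper's proof. You place the Corollary~\ref{C:2} sequence $r-1,x_1-1,y_1-1,\dots,x_{m/2}-1$ around the cycle with $2s=m$ and compute $x_i=p+i$ for $i\le\frac m2-1$, $x_{m/2}=p+\frac m2+1$, and $y_i=r-i$ or $r-i-1$. Your label and edge-difference computations are correct, and your pair-sum argument for condition (c) supplies what the paper calls immediate. It also explains the two ranges of $r$.

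Two small corrections are needed.

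\textbf{The self-pair claim.} You say that $2v$ ``again falls in this range''; this is false for $v=p+m+1$. There $2v=2p+2m+2$, which the hypothesis $r\ge 2p+2m+1$ allows. For $r=2p+2m+2$ the edge $\{y_{m/2-1}-1,x_{m/2}-1\}$ has difference $r-a_{m-1}=r/2$. For example, $m=6$, $p=0$, $r=14$ gives the cycle $13,0,12,1,10,3$, which has the difference $7$.

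Whether this violates (c) depends on whether (c) is meant to exclude the self-complementary value $\frac{t+1}{2}$. The paper's statement has the same edge case, and it is harmless for odd $r$, which is the only case used later. Still, you should not assert that this case is excluded: either drop the claim or exclude $r=2p+2m+2$.

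\textbf{Condition (a).} The requirement $r-1\ge 2m$ is part of the hypothesis of Corollary~\ref{C:2} (that $f$ is a $\overline{\rho}$-labeling), not a consequence of it. Check it directly. It is clear in the first range. The second range $p+m+2\le r\le 2p+1$ is nonempty only when $p\ge m+1$, and then $r\ge p+m+2\ge 2m+3$.
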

\begin{proof}
  It is sufficient to give a labeling to the vertices of $C_m$ as in Corollary \ref{C:2}, with $2s=m$, in such a way that two consecutive labelings in the sequence $r-1,x_1-1,y_1-1,\dots,x_{m/2-1}-1,y_{m/2-1}-1,x_{m/2}-1$ are assigned to adjacent vertices. Clearly, this implies that the vertices with labelings $r-1$ and $x_{m/2}-1$ are adjacent and the statement follows by noting that
\[
  x_i=\begin{cases}
    p+i & \text{for }i=1,\dots,\dfrac{m}{2}-1\\[2ex]
    p+\dfrac{m}{2}+1 & \text{for }i=\dfrac{m}{2}
    \end{cases}
\]
and
\[
  y_i= \begin{cases}
    r-i & \text{ for }i=1,\dots,\dfrac{m-2}{4}\\[2ex]
    r-i-1 & \text{ for }i=\dfrac{m+2}{4},\dots,\dfrac{m}{2}-1,
  \end{cases}
\]
  since the other conditions are immediate.
\end{proof}

Now we are ready to prove the main results of this section.

\begin{thm} \label{T:1}
Let $m,n\in\mathbb N$, with $m\ge 4$ even. Then there exists a uniformly ordered labeling of $C_{m,(n)}$.
\end{thm}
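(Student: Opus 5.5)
The plan is to build the labeling as one piece on the cycle and one on the path, glued at the root. Put $M=m+n=|E(C_{m,(n)})|$ and aim for $t=2M$, i.e.\ labels in $[0,r-1]$ with $r=2M+1$. The cycle part will come from Proposition \ref{P:1} (for $m\equiv 0\mod 4$) or Proposition \ref{P:2} (for $m\equiv 2\mod 4$), applied with $p=n$ and $r=2M+1$. The hypothesis $r\ge 2p+2m+1$ holds, since $2n+2m+1=r$.

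\textbf{The cycle.} This yields an $(A',B',2M)$-uniformly ordered labeling $g$ of $C_m$ with
\[
f(A')\subseteq[n,\,n+\tfrac{m}{2}],\qquad f(B')\subseteq[\,2M-\tfrac m2-1,\,2M\,].
\]
Its difference set is $[n+1,n+m]$ in the first case and $[n+1,n+m-1]\cup\{n+m+1\}$ in the second.

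\textbf{The path.} I would realize the differences $1,\dots,n$ as actual differences along a zigzag on a block of $n+1$ consecutive integers $[c,c+n]$. Writing $\mu$ for the middle of the block, the zigzag visits $\mu,\mu+1,\mu-1,\mu+2,\dots$ or its mirror image, so the successive differences are $1,2,\dots,n$. Such a zigzag ends at one of the extremes $c$ or $c+n$; which extreme is determined by the parity of $n$ and the choice of starting direction. Its labels below $\mu$ alternate with those above, so the part of the path meeting the root's colour class gets the small labels.

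\textbf{Gluing.} I take the root of $C_{m,(n)}$ to be the vertex of the cycle carrying $\max g(A')$, and choose the zigzag to end at its lower extreme $c=\max g(A')$. If parity forces the end at the upper extreme, I instead use the vertex carrying $\min g(B')$ as root, with the block $[\min g(B')-n,\min g(B')]$. The path vertices then split by the zigzag's midpoint: those below go to $A$, those above to $B$. Hence $A=A'\cup\{\text{lower zigzag vertices}\}$, $B=B'\cup\{\text{upper zigzag vertices}\}$, and $\lambda$ is the zigzag midpoint.

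\textbf{Checks.} Three things need verifying.
\begin{itemize}
\item \emph{Injectivity of labels.} The zigzag block, of length $n$ above $n+\tfrac m2$, stays below $2n+\tfrac m2+1$. This is less than $2M-\tfrac m2-1=2n+\tfrac{3m}{2}-1$ since $m\ge4$, so it misses $g(B')$; and it meets $g(A')$ only at the root.
\item \emph{Injectivity of $\tilde f$.} The difference sets $[1,n]$ and that of $g$ are disjoint.
\item \emph{Condition (c).} In the $0\bmod 4$ case the difference set is $[1,M]$, so no $d$ has $2M+1-d$ also present. In the $2\bmod 4$ case it is $[1,M-1]\cup\{M+1\}$. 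Here $M+1$ pairs with $2M+1-(M+1)=M$, which is absent, so (c) holds. Condition (d) holds by construction.
\end{itemize}

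\textbf{Main obstacle.} I expect the parity bookkeeping to be the delicate step. The far extreme at which the zigzag ends must coincide with a cycle vertex that is extremal in the correct class ($\max g(A')$ below, or $\min g(B')$ above). In the second alternative one must recheck that the block $[\min g(B')-n,\min g(B')]$ stays above $\max g(A')$; the margin $2n+\tfrac{3m}{2}-1-n>n+\tfrac m2$ appears sufficient. Small cases such as $n=0$ (where the cycle labeling alone suffices) and $m=6$ in Proposition \ref{P:2} should be checked separately.
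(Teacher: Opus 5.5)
Your construction is correct, and it takes a genuinely different route from the paper's.

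\textbf{The paper's route.} The paper takes $p=0$ and runs one alternating sequence (the $x_i-1$, $y_i-1$ of Section 3, with $r=2(m+n)+1$) around the cycle and then on along the path. So the cycle gets the small differences, up to sign mod $r$: $[1,m]$ or $[1,m-1]\cup\{m+1\}$. The path gets the large ones, and its labels alternate between values just above $\max f(A')$ and just below $\min f(B')$. For $m\equiv 2\pmod 4$ this requires the ad hoc last difference $a_{m+n-1}=r-m$ to recover the missing value $m$.

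\textbf{Your route.} You reverse the roles. Proposition \ref{P:1} or \ref{P:2} with $p=n$ puts the cycle on the classes $n+1,\dots,n+m$. This leaves every label strictly between $\max g(A')\le n+\tfrac m2$ and $\min g(B')=2n+\tfrac{3m}{2}$ unused, and you drop the classical graceful path $c,c+n,c+1,c+n-1,\dots$ into that gap. For $m\equiv 2\pmod 4$ the extra difference $n+m+1=M+1$ is automatically the partner of $M$ modulo $2M+1$, so no correction is needed.

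\textbf{What each buys.} Your argument is more elementary and self-contained. The paper's yields a labeling that is globally of alternating type with explicit $x_i,y_i$. That is exactly the form reused in Theorem \ref{T:3} and in the merging step of Theorem \ref{T:5}.

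\textbf{Two small points.} First, the parity worry is unfounded. Read from the root, the sequence $c,c+n,c+1,c+n-1,\dots$ has differences $n,n-1,\dots,1$ for every $n$. So the root can always be the vertex labelled $\max g(A')$; your fallback at $\min g(B')$ is valid but never needed. Second, the ``difference sets'' of Propositions \ref{P:1} and \ref{P:2} are residues up to sign. The actual values of $\tilde f$ on the cycle are $r-d$, for example $[n+m+1,n+2m]$ when $m\equiv 0\pmod 4$. These still avoid $[1,n]$ and lie in distinct classes $\{d,r-d\}$, so your checks of (b) and (c) stand.
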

\begin{proof}
  If $n=0$, we get the statement by Proposition \ref{P:1} and Proposition \ref{P:2} by taking $p=0$ and $r=2m+1=2|E(C_m)|+1$.

  Let $n\ge 1$ and suppose that $m\equiv 0\mod 4$. Let $r=2(m+n)+1=2|E(C_{m,(n)})|+1$ and let:
    \[
    a_j=\begin{cases}
      j & \text{for } j=1,\dots,\dfrac{m}{2}-1\\
      j+1 & \text{for } j= \dfrac{m}{2},\dots,m+n-1.
      \end{cases}
  \]
  Let us define a labeling on $C_{m,(n)}$ as in Corollary \ref{C:1} if $n$ is odd and as in Corollary \ref{C:2} if $n$ is even, in such a way that, given $s=\left\lfloor\tfrac{m+n}{2}\right\rfloor$:
\begin{itemize}
  \item the labelings $r-1$, $x_1-1$, $y_1-1$,\dots, $x_{m/2-1}-1$, $y_{m/2-1}-1$, $x_{m/2}-1$ are given, in this order, to the vertices of the cycle $C_m$;
  \item the root receives the label $x_{m/2}-1$;
  \item the remaining labels, i.e. $y_{m / 2}-1$, $x_{m/2+1}-1$, $y_{m/2+1}-1$,\dots, $x_{s}-1$, $y_{s}-1$ for $n$ odd and $y_{m/2}-1$, $x_{m/2+1}-1$, $y_{m/2+1}-1$,\dots, $x_{s-1}-1$, $y_{s-1}-1$, $x_{s}-1$ for $n$ even, are given in this order to the vertices of the pendant path $P_{n+1}$ starting from the vertex adjacent to the root.
\end{itemize}
The statement follows by Corollary \ref{C:1}, Corollary \ref{C:2} and Proposition \ref{P:1}.

  Let $n\ge 1$ and suppose that $m\equiv 2\mod 4$. Let $r=2(m+n)+1$ and let:
  \[
    a_j=\begin{cases}
      j & \text{for } j=1,\dots,\dfrac{m}{2}\\
      j+1 & \text{for } j=\dfrac{m}{2}+1,\dots,m-2\\
      j+2 & \text{for } j=m-1,\dots,m+n-2\\
      r-m & \text{for } j=m+n-1.
      \end{cases}
  \]
  Let us define a labeling on $C_{m,(n)}$ as in Corollary \ref{C:1} if $n$ is odd and as in Corollary \ref{C:2} if $n$ is even, in such a way that, given $s=\left\lfloor\tfrac{m+n}{2}\right\rfloor$:
\begin{itemize}
  \item the labelings $r-1$, $x_1-1$, $y_1-1$,\dots, $x_{m/2-1}-1$, $y_{m/2-1}-1$, $x_{m/2}-1$ are given, in this order, to the vertices of the cycle $C_m$;
  \item the root receives the label $x_{m/2}-1$;
  \item the remaining labels, i.e. $y_{m/2}-1$, $x_{m/2+1}-1$, $y_{m/2+1}-1$,\dots, $x_{s}-1$, $y_{s}-1$ for $n$ odd and $y_{m/2}-1$, $x_{m/2+1}-1$, $y_{m/2+1}-1$,\dots, $x_{s-1}-1$, $y_{s-1}-1$, $x_{s}-1$ for $n$ even, are given in this order to the vertices of the pendant path $P_{n+1}$ starting from the vertex adjacent to the root.
\end{itemize}
The statement follows by Corollary \ref{C:1}, Corollary \ref{C:2} and Proposition \ref{P:2}.

\end{proof}

By Theorem~\ref{T:1} and Theorem \ref{T:0} we immediately get:

\begin{corollary}
 For any $m,n\in\mathbb N$, with $m\ge 4$ even, there exists a cyclic $C_{m,(n)}$-decomposition of $K_v$ for any $v\equiv 1 \mod 2(m+n)$.
\end{corollary}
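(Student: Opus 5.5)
This corollary follows by combining Theorem~\ref{T:1} with Theorem~\ref{T:0}. The only thing to check is that the uniformly ordered labeling produced there meets the hypotheses of Theorem~\ref{T:0}, and that the resulting family of orders is exactly the stated one.

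First I fix the graph and its edge count. The graph $G=C_{m,(n)}$ consists of the cycle $C_m$ together with a pendant path $P_{n+1}$ attached at the root, so it has $M:=|E(G)|=m+n$ edges. Since $m$ is even, $G$ is bipartite; let $\{A,B\}$ be the bipartition used in the proof of Theorem~\ref{T:1}. Theorem~\ref{T:1} gives an $(A,B)$-uniformly ordered labeling $f\colon V(G)\to[0,2M]$; in its proof the codomain is $[0,r-1]$ with $r=2(m+n)+1$, so $t=2M$. As observed in the introduction, when $t=2M$ conditions (a)--(c) make $f$ a $\rho$-labeling, and condition (d), which is condition $(4)$, makes it a $\rho^{++}$-labeling.

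Next I check that $f$ is a $\rho^+$-labeling. Condition $(4)$ implies condition $(3)$: for every edge $\{a,b\}$ with $a\in A$ and $b\in B$ we have $f(a)\le\lambda<f(b)$.

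Theorem~\ref{T:0} then gives a cyclic $G$-decomposition of $K_{2Mx+1}$ for every positive integer $x$. The values $v=2(m+n)x+1$ with $x\ge 1$ are precisely the $v\equiv 1\pmod{2(m+n)}$ with $v>1$. The remaining value $v=1$ gives the trivial $K_1$, which has no edges and is decomposed by the empty family of blocks, so either it is covered vacuously or the statement should be read as concerning $v>1$.

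The only point needing care is this edge case $v=1$. Otherwise the argument is a direct combination of the two cited theorems, with no real obstacle.
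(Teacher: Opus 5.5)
Your proposal is correct and follows the paper's own argument. The paper derives this corollary directly from Theorem~\ref{T:1} and Theorem~\ref{T:0}, and you verify the implicit bridge: a uniformly ordered labeling with $t=2(m+n)$ is a $\rho$-labeling satisfying condition $(4)$, hence $(3)$, i.e.\ a $\rho^{+}$-labeling. Your remark on the trivial case $v=1$ is a harmless addition that the paper does not bother to make.
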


We can also prove the following:

\begin{thm}\label{T:3}
Let $m,n_1,n_2\in\mathbb N$, with $m\ge 4$ even. Then there exists a uniformly ordered labeling of $C_{m,(n_1,n_2)}$.
\end{thm}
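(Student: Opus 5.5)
We follow the scheme of Theorem \ref{T:1}. The graph $C_{m,(n_1,n_2)}$ is treated as the cycle $C_m$ with a single long pendant trail that runs through the root. In an alternating labeling, consecutive labels in the sequence $r-1,x_1-1,y_1-1,x_2-1,\dots$ are assigned to adjacent vertices, and the labels form a single trail whose edges realize the differences $a_1,a_2,\dots$ in increasing order. We therefore traverse $C_{m,(n_1,n_2)}$ as follows: first one pendant path, ending at the root; then the whole cycle, returning to the root; then the other pendant path.

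The difficulty is that an alternating labeling assigns distinct labels along the trail, while the root appears twice on it. To avoid this, we use the variant already seen for the cycle in Propositions \ref{P:1} and \ref{P:2}. There the cycle is closed by letting the last label $x_{m/2}-1$ be adjacent to $r-1$, and that closing edge has difference $r-x_{m/2}$, which is not one of the listed $a_j$ but is accounted for by choosing the last $a_j$ suitably (as in the $m\equiv 2 \pmod 4$ case of Theorem \ref{T:1}, where $a_{m+n-1}=r-m$).

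Concretely, we set $r=2(m+n_1+n_2)+1$ and take $s$ and a parity case according to whether $n_1+n_2$ is odd or even, so that Corollary \ref{C:1} or Corollary \ref{C:2} applies. The cycle receives $r-1,x_1-1,\dots,x_{m/2}-1$ exactly as in Theorem \ref{T:1}, with the root labeled $x_{m/2}-1$. The first pendant path $P_{n_1+1}$ receives the labels $y_{m/2}-1,x_{m/2+1}-1,\dots$, continuing the sequence from the root, exactly as before. For the second pendant path $P_{n_2+1}$ we also attach it at the root. Its first edge must then start from the root label $x_{m/2}-1$, not from the end of the first path. Our plan is to relabel so that the second path continues the alternating sequence from the far end of the first path, by using a shift. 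Alternatively, we place the second path at the vertex labeled $r-1$, which amounts to choosing the root to be the vertex labeled $r-1$ and attaching the two paths on either side. In that arrangement the order would be: path $P_{n_1+1}$, reversed, given the labels continuing downward from $r-1$; the cycle; and then $P_{n_2+1}$ continuing from $x_{m/2}-1$. This requires the root to be $r-1$, so we instead put the root of the cycle at the vertex labeled $r-1$.

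The honest plan is therefore to build the labeling in two blocks that are merged. Block one is the labeling of $C_{m,(n_2)}$ from Theorem \ref{T:1}, but with differences $[n_1+1, n_1+m+n_2]$, obtained with $p=n_1$ as in Propositions \ref{P:1} and \ref{P:2}. Block two is an alternating labeling of the path $P_{n_1+1}$ with differences $[1,n_1]$, built from Corollaries \ref{C:3} and \ref{C:4}. We then shift it, using Lemma \ref{L:2} and Corollaries \ref{C:3} and \ref{C:4}, so that its endpoint coincides in label with the root $x_{m/2}-1$ of block one, and so that its $A$-labels stay below $\lambda$ and its $B$-labels above $\lambda$.

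The main obstacle is checking that the shifted block is compatible. Its labels must not collide with those of block one, and the uniform ordering threshold must be common to both blocks. This is where the windows $r\ge 2p+2m+1$ or $p+m+1\le r\le 2p+1$ in Propositions \ref{P:1} and \ref{P:2}, together with Remark \ref{R:1}, are used. We would first try $p=n_1$ and $r=2(m+n_1+n_2)+1$, and separate the cases $m \equiv 0$ and $m\equiv 2 \pmod 4$ and the parities of $n_1,n_2$.
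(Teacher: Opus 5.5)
\textbf{There is a genuine gap.} Your final plan leaves the decisive step unverified: the compatibility of the two blocks. With the roles you assign to the differences, that step actually fails.

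Take $m\equiv 0\pmod 4$. In your block one (the construction of Theorem~\ref{T:1} with offset $p=n_1$):
\begin{itemize}
\item the $A$-labels are the consecutive integers $n_1,\dots,n_1+s-1$, with $s=\lfloor (m+n_2)/2\rfloor$;
\item the $B$-labels sit near $r-1$;
\item the root gets $R=n_1+\tfrac m2-1$.
\end{itemize}
As soon as $n_2\ge 2$, both $R-1$ and $R+1$ are already $A$-labels, so the root lies strictly inside the block of $A$-labels.

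The alternating labeling of $P_{n_1+1}$ with $a_j=j$ from Corollaries~\ref{C:3} and~\ref{C:4} has labels $0,r-1,1,r-2,2,\dots$, a run of consecutive residues mod $r$. Any shift putting an endpoint on $R$ is again a run of at least two consecutive residues containing $R$. It therefore contains $R-1$ or $R+1$, which is a collision.

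No other labeling of the second path rescues this block one either. Take $m=4$, $n_1=n_2=2$, $r=17$:
\begin{itemize}
\item block one is the cycle $16,2,14,3$ with the path $3,13,4$, so the root is $3$, $f(A)=\{2,3,4\}$, $f(B)=\{13,14,16\}$, and the differences used are $3,\dots,8$;
\item the second path must realize the differences $1$ and $2$, and the only possibilities are $3,5,6$ and $3,1,0$;
\item in both cases $[\min f(A),\max f(A)]$ meets $f(B)$ and $[\min f(B),\max f(B)]$ meets $f(A)$, so by Lemma~\ref{L:1} no shift is uniformly ordered.
\end{itemize}
Swapping the two paths changes nothing here. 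The windows of Propositions~\ref{P:1}, \ref{P:2} and Remark~\ref{R:1} do not address this obstruction. Small steps simply cannot carry the second path out of the interior of the $A$-block into a separated position.

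\textbf{The missing idea is to reverse the roles.}
\begin{itemize}
\item Assume $n_1\le n_2$ and label $C_{m,(n_1)}$ exactly as in Theorem~\ref{T:1}, with $p=0$ and $r=2(m+n_1+n_2)+1$. The root then gets $\tfrac m2-1$ (resp.\ $\tfrac m2$ when $m\equiv 2\pmod 4$).
\item Give $P_{n_2+1}$ the largest differences $[m+n_1+1,m+n_1+n_2]$. Use an alternating labeling of type $(-a'_1,a'_2,-a'_3,\dots)$ from Corollaries~\ref{C:3} and~\ref{C:4}, with $a'_1=m+n_1+1$ and $a'_i=r-(m+n_1+n_2+2-i)$ (last term $r-m$ when $m\equiv 2\pmod 4$).
\item Shift this labeling by the root label.
\end{itemize}
For $m\equiv 0\pmod 4$, the first edge then jumps from the root to $y'_1+\tfrac m2-1$, just below the old $B$-labels. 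The path then returns to $x'_1+\tfrac m2-1=n_2+\tfrac m2-1$, just above the old $A$-labels. The whole compatibility check reduces to
\[
x'_1+\tfrac m2-1>x_s-1 \quad\text{and}\quad y'_1+\tfrac m2-1<y_s-1 \ (\text{resp. } y_{s-1}-1),
\]
and this is exactly where $n_1\le n_2$ is used.
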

\begin{proof}
  Let $n_1,n_2\ge 1$, since otherwise the statement follows by Theorem \ref{T:1}. We can also suppose that $n_1\le n_2$.

  \textbf{Case 1.} Suppose that $m\equiv 0\mod 4$ and let $n_1\le n_2$. Let $r=2(m+n_1+n_2)+1$  and let:
    \[
    a_j=\begin{cases}
      j & \text{for } j=1,\dots,\dfrac{m}{2}-1\\
      j+1 & \text{for } j= \dfrac{m}{2},\dots,m+n_1-1.
      \end{cases}
    \]
  Let $C_{m,(n_1)}$ be the graph obtained by $C_{m,(n_1,n_2)}$ by deleting the pendant path $P_{n_2+1}$. Let us define a labeling on $C_{m,(n_1)}$ in a way to the similar to the one of Theorem \ref{T:1} (so the numbers $x_i$ and $y_i$ are defined as in Corollary \ref{C:1} and Corollary \ref{C:2}). So, given $s=\left\lfloor\tfrac{m+n_1}{2}\right\rfloor$:
\begin{itemize}
  \item the labelings $r-1$, $x_1-1$, $y_1-1$,\dots, $x_{m/2-1}-1$, $y_{m/2-1}-1$, $x_{m/2}-1$ are given, in this order, to the vertices of the cycle $C_m$;
  \item the root receives the label $x_{m/2}-1$;
  \item the remaining labels, i.e. $y_{m/2}-1$, $x_{m/2+1}-1$, $y_{m/2+1}-1$,\dots, $x_{s}-1$, $y_{s}-1$ for $n$ odd and $y_{m/2}-1$, $x_{m/2+1}-1$, $y_{m/2+1}-1$,\dots, $x_{s-1}-1$, $y_{s-1}-1$, $x_{s}-1$ for $n$ even, are given in this order to the vertices of the pendant path $P_{n_1+1}$ starting from the vertex adjacent to the root.
\end{itemize}
Note that for $i\ge \tfrac{m}{2}$:
\[
  x_i=i \quad\text{ and }\quad y_i=r-i-1.
\]
This labeling is given to the corresponding vertices of the graph $C_{m,(n_1,n_2)}$. For the remaining vertices of the pendant path $P_{n_2+1}$, let:
\[
a'_i= \begin{cases}
  m+n_1+1 & \text{for }i=1\\
  r-(m+n_1+n_2+2-i) & \text{for }i=2,\dots,n_2, \text{ if }n_2\ge 2.
\end{cases}
\]
Let us define a labeling of alternating type $((-a'_1,a'_2,-a'_3,\dots),\tfrac{m}{2}-1)$ on the pendant path $P_{n_2+1}$ as done in Corollary \ref{C:3} and Corollary \ref{C:4} in such a way that the labels are
\begin{equation} \label{eq:1}
  \dfrac{m}{2}-1,y'_1+\dfrac{m}{2}-1,x'_1+\dfrac{m}{2}-1,y'_2+\dfrac{m}{2}-1,x'_2+\dfrac{m}{2}-1,\dots
\end{equation}
with:
\[
x'_i=\sum_{j=1}^i a'_{2j}-\sum_{j=0}^{i-1} a'_{2j+1}  \quad\text{for }i\ge 1 \text{ and }n_2\ge 2
\]
and
\[
  y'_i= \begin{cases}
  \displaystyle{r-a'_1} & \text{for }i=1\\[2ex]
  \displaystyle{r+\sum_{j=1}^{i-1} a'_{2j}-\sum_{j=0}^{i-1} a'_{2j+1}} & \text{for }i\ge 2.
  \end{cases}
\]
Obviously, $\tfrac{m}{2}-1$ is still the label given to the root of the graph $C_{m,(n_1,n_2)}$ and the remaining labels in \eqref{eq:1} are given, in that order, to the other vertices of the pendant path $P_{n_2+1}$. The statement in this case follows by Corollary \ref{C:3} and Corollary \ref{C:4} and by the following facts, easy to verify:
\[
x'_1+\dfrac{m}{2}-1>x_{s}-1 \text{ in the case }n_2\ge 2,
\]
\[
y'_1+\dfrac{m}{2}-1<y_{s}-1 \quad\text{for $n_1$ odd}
\]
and
\[
y'_1+\dfrac{m}{2}-1<y_{s-1}-1 \quad\text{for $n_1$ even.}
\]
Note that by Corollary \ref{C:3} and Corollary \ref{C:4} this labeling  on the path $P_{n_2+1}$ is $(A',B',r-1)$-uniformly ordered, where $\{A',B'\}$ is the vertex bipartition of $P_{n_2+1}$ such that the root is an element of $A'$. This proves the statement in the case $m\equiv 0\mod 4$.

\textbf{Case 2.} Suppose that $m\equiv 2\mod 4$ and let $n_1\le n_2$. Let $r=2(m+n_1+n_2)+1$  and let:
    \[
    a_j=\begin{cases}
      j & \text{for } j=1,\dots,\dfrac{m}{2}\\
      j+1 & \text{for } j= \dfrac{m}{2}+1\dots,m-2\\
      j+2 & \text{for } j=m-1,\dots,m+n_1-1.
      \end{cases}
    \]
  Let $C_{m,(n_1)}$ be the graph obtained by $C_{m,(n_1,n_2)}$ by deleting the pendant path $P_{n_2+1}$. Let us define a labeling on $C_{m,(n_1)}$ in a way to the similar to the one of Theorem \ref{T:1} (so the numbers $x_i$ and $y_i$ are defined as in Corollary \ref{C:1} and Corollary \ref{C:2}). So, given $s=\left\lfloor\tfrac{m+n_1}{2}\right\rfloor$:
\begin{itemize}
  \item the labelings $r-1$, $x_1-1$, $y_1-1$,\dots, $x_{m/2-1}-1$, $y_{m/2-1}-1$, $x_{m/2}-1$ are given, in this order, to the vertices of the cycle $C_m$;
  \item the root receives the label $x_{m/2}-1$;
  \item the remaining labels, i.e. $y_{m/2}-1$, $x_{m/2+1}-1$, $y_{m/2+1}-1$,\dots, $x_{s}-1$, $y_{s}-1$ for $n$ odd and $y_{m/2}-1$, $x_{m/2+1}-1$, $y_{m/2+1}-1$,\dots, $x_{s-1}-1$, $y_{s-1}-1$, $x_{s}-1$ for $n$ even, are given in this order to the vertices of the pendant path $P_{n_1+1}$ starting from the vertex adjacent to the root.
\end{itemize}
Note that for $i\ge \tfrac{m}{2}$:
\[
x_i=i+1 \quad \text{and} \quad y_i=r-i-1.
\]
This labeling is given to the corresponding vertices of the graph $C_{m,(n_1,n_2)}$. For the remaining vertices of the pendant path $P_{n_2+1}$, let us consider the following cases:
\begin{itemize}
  \item for $n_2=1$ (and so $n_1=1$) let $a'_1=r-m$;
  \item for $n_2=2$ let $a'_1=m+n_1+2$ and $a'_2=r-m$;
        \item for $n_2\ge 3$ let:
       \[
a'_i= \begin{cases}
  m+n_1+2 & \text{for }i=1\\
  r-(m+n_1+n_2+2-i) & \text{for }i=2,\dots,n_2-1\\
  r-m & \text{for }i=n_2.
\end{cases}
\]
\end{itemize}
We define a labeling of alternating type $((-a'_1,a'_2,-a'_3,\dots),\tfrac{m}{2})$ on the pendant path $P_{n_2+1}$ as done in Corollary \ref{C:3} and Corollary \ref{C:4} in such a way that the labels are
\begin{equation} \label{eq:2}
  \dfrac{m}{2},y'_1+\dfrac{m}{2},x'_1+\dfrac{m}{2},y'_2+\dfrac{m}{2},x'_2+\dfrac{m}{2},\dots
\end{equation}
with:
\[
x'_i=\sum_{j=1}^i a'_{2j}-\sum_{j=0}^{i-1} a'_{2j+1}  \quad\text{for }i\ge 1\text{ and if }n_2\ge 2
\]
and
\[
  y'_i= \begin{cases}
  \displaystyle{r-a'_1} & \text{for }i=1\\[2ex]
  \displaystyle{r+\sum_{j=1}^{i-1} a'_{2j}-\sum_{j=0}^{i-1} a'_{2j+1}} & \text{for }i\ge 2.
  \end{cases}
\]
Obviously, $\tfrac{m}{2}$ is still the label given to the root of the graph $C_{m,(n_1,n_2)}$ and the remaining labels in \eqref{eq:2} are given, in that order, to the other vertices of the pendant path $P_{n_2+1}$. The statement in this case follows by Corollary \ref{C:3} and Corollary \ref{C:4} and by the following facts, easy to verify:
\[
x'_1+\dfrac{m}{2}>x_{s}-1 \text{ in the case }n_2\ge 2,
\]
\[
y'_1+\dfrac{m}{2}<y_{s}-1 \quad\text{for $n_1$ odd}
\]
and
\[
y'_1+\dfrac{m}{2}<y_{s-1}-1 \quad\text{for $n_1$ even.}
\]
Note that by Corollary \ref{C:3} and Corollary \ref{C:4} this labeling  on the path $P_{n_2+1}$ is $(A',B',r-1)$-uniformly ordered, where $\{A',B'\}$ is the vertex bipartition of $P_{n_2+1}$ such that the root is an element of $A'$. This proves the statement in the case $m\equiv 2\mod 4$.
\end{proof}

As a consequence we have:

\begin{corollary}
 For any $m,n_1,n_2\in\mathbb N$, with $m\ge 4$ even, there exists a cyclic $C_{m,(n_1,n_2)}$-decomposition of $K_v$ for any $v\equiv 1 \mod 2(m+n_1+n_2)$.
\end{corollary}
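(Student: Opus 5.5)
This corollary follows by combining Theorem \ref{T:3} with Theorem \ref{T:0}. Let $G=C_{m,(n_1,n_2)}$ with $m\ge 4$ even. Then $G$ has
\[
M=m+n_1+n_2
\]
edges: the cycle contributes $m$ edges and the two pendant paths contribute $n_1$ and $n_2$ edges. Since $m$ is even, the cycle $C_m$ is bipartite, and attaching pendant paths preserves bipartiteness. So $G$ is bipartite, with vertex bipartition $\{A,B\}$ given by the parity classes of distance from the root.

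\textbf{Step 1: obtain a $\rho^+$-labeling.} Theorem \ref{T:3} gives a uniformly ordered labeling of $G$. As constructed there, it is a labeling into $[0,r-1]$ with $r=2M+1$, so $t=2M$. By the convention in the introduction, an $(A,B,2M)$-uniformly ordered labeling is exactly a $\rho^{++}$-labeling. Explicitly:
\begin{itemize}
\item conditions (b) and (c) with $t=2M$ say that the $M$ edge differences are distinct and contain no pair $\{i,2M+1-i\}$;
\item by pigeonhole, exactly one of $i$ or $2M+1-i$ is then attained for each $i\in[1,M]$, which is Rosa's condition (2);
\item condition (4) implies condition (3).
\end{itemize}
Hence $f$ is a $\rho^+$-labeling of $G$.

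\textbf{Step 2: apply Theorem \ref{T:0}.} This yields a cyclic $G$-decomposition of $K_{2Mx+1}$ for every positive integer $x$. Every $v\equiv 1\pmod{2M}$ with $v>1$ has the form $2Mx+1$ with $x\ge 1$, which is exactly the range claimed. The trivial case $v=1$ is either excluded or vacuous.

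\textbf{Main obstacle.} The only point needing care is that Theorem \ref{T:3} delivers the labeling with $t$ exactly $2|E(G)|$, not merely $t\ge 2M$. This holds because both cases of its proof fix $r=2(m+n_1+n_2)+1$.

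The degenerate cases $n_1=0$ or $n_2=0$ need no separate argument. There the graph is $C_{m,(n)}$, and the labeling supplied by Theorem \ref{T:1} is again taken with $r=2|E|+1$, so the same reasoning applies verbatim.
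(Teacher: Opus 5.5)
Your proposal is correct and is essentially the paper's argument: Theorem~\ref{T:3} supplies a uniformly ordered labeling with $t=2(m+n_1+n_2)$, which is a $\rho^{++}$-labeling and hence a $\rho^{+}$-labeling, and Theorem~\ref{T:0} then gives the cyclic decomposition of $K_{2(m+n_1+n_2)x+1}$ for every $x\ge 1$. Your explicit checks (the pigeonhole step giving Rosa's condition, and confirming $t=2|E|$ in both Theorem~\ref{T:1} and Theorem~\ref{T:3}) spell out details the paper leaves implicit.
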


\section{Merging labelings: graphs with more than one cycle}

In this section we are going to use in a more explicit way the merging technique introduced in the previous section to prove the following:

\begin{thm}\label{T:5}
  Let $t\in\mathbb N$ and $m_1,\dots,m_t,n_1,\dots,n_t\in\mathbb N$, such that $m_t\equiv 0\mod 2$, $m_i\equiv 0\mod 4$ and $n_i\equiv 0\mod 2$ for $t\ge 2$ and $i=1,\dots,t-1$. Then there exists a uniformly ordered labeling of $C_{m_1,(n_1),m_2,(n_2),\dots,m_t,(n_t)}$.
\end{thm}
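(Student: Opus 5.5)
We argue by induction on $t$, the case $t=1$ being Theorem \ref{T:1}. Set $M=\sum_{i}(m_i+n_i)$, the number of edges of $G=C_{m_1,(n_1),\dots,m_t,(n_t)}$, and $r=2M+1$. We aim for a labeling with values in $[0,r-1]$ whose difference set is $[1,M]$ up to a few prescribed replacements $d\mapsto r-d$, so that condition (c) holds automatically. We split the edges into consecutive blocks of differences, one block for each piece $C_{m_i}\cup P_{n_i+1}$. The first piece gets the smallest differences. Each later piece gets the next block, shifted by $p_i=\sum_{j<i}(m_j+n_j)$.

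\textbf{First piece.} On $C_{m_1,(n_1)}$ we use the labeling of Theorem \ref{T:1} in the case $m_1\equiv 0 \bmod 4$. The alternating sequence is $r-1,x_1-1,y_1-1,\dots$, first around the cycle and then along the path. Because $n_1$ is even, the labeling is as in Corollary \ref{C:2}: the path ends at a vertex labelled $x_s-1$, which lies in the lower class $A$, with $s=(m_1+n_1)/2$. This end vertex is the second root $\rho_1$, and it lies on $C_{m_2}$. Since $m_1\equiv 0\bmod 4$, we have $x_i=i$ for $i\ge m_1/2$, so $f(\rho_1)=s-1$. The upper labels are $y_i-1=r-i-2$, forming a near-interval just below $r-1$.

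\textbf{Inductive merge.} Suppose $C_{m_1,(n_1),\dots,m_{i-1},(n_{i-1})}$ has been labelled in this way. Its last vertex $\rho_{i-1}$ is in $A$, and its lower labels form an interval $[0,\ell]$ up to one gap, with $f(\rho_{i-1})$ the current maximum of $A$. Its upper labels form an interval just below $r-1$, again with at most a fixed number of gaps. We now place $C_{m_i}$ with the root $\rho_{i-1}$ on it. We take the alternating labeling of Proposition \ref{P:1}, or Proposition \ref{P:2} if $i=t$ and $m_t\equiv 2\bmod 4$, with parameter $p=p_i$, and continue it along $P_{n_i+1}$ as in Theorem \ref{T:1}. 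This piece has difference set $[p_i+1,p_i+m_i+n_i]$, with possibly one replacement as in Theorem \ref{T:1}, and since $r\ge 2p_i+2m_i+1$ Proposition \ref{P:1} applies.

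There is a mismatch to correct. In that labeling the vertex labelled $r-1$ lies in $B$, while $\rho_{i-1}$ lies in $A$. The vertex adjacent to the previous path must be the one at distance $2$ from the next root, and the roots $\rho_{i-1}$ and $\rho_i$ sit at distance $2$ on $C_{m_i}$. We therefore use a $k$-shift together with the rotation of the cycle given by the alternating order. By Corollary \ref{C:2} and Lemma \ref{L:2}, a shift by $k\in[r+1-a_1,r-1]$ keeps the labeling $(A,B,r-1)$-uniformly ordered. We choose $k$ so that the vertex labelled $x_1-1$ (or $x_{m_i/2}-1$) is moved onto $f(\rho_{i-1})$. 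Then $\rho_{i-1}$, its neighbour $r-1$ (shifted), and the next lower vertex play the roles of the two roots at distance $2$. Because $m_i\equiv 0\bmod 4$, the new lower labels are exactly the integers just above $f(\rho_{i-1})$. The new upper labels $y_j-1+k$ land just below the previous upper block. Here the parity assumption $n_i$ even guarantees that the new path again ends in $A$, at the new maximum of $A$, which becomes $\rho_i$.

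\textbf{Main obstacle.} The delicate step is checking injectivity and the separation $\max f(A)<\min f(B)$ after each merge. This amounts to inequalities like those in the proof of Theorem \ref{T:3}: the top new $A$-label must stay below the lowest new $B$-label, and the $B$-labels of successive pieces must not collide. The gaps created at $m_i/4$ in Proposition \ref{P:1} also have to be handled. I would first try choosing the $a_j$ globally, as $a_j=j$ or $j+1$ with one skipped value per cycle, so that all $x$'s and $y$'s are explicit, exactly as in the formulas for $x_i$ and $y_i$ in Theorem \ref{T:3}. Disjointness would then reduce to a counting check using $r=2M+1$. For the last cycle with $m_t\equiv 2\bmod4$, I would use the modified $a_j$ of Theorem \ref{T:1}, with last difference $r-m$, so that condition (c) still holds.
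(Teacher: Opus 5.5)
Your proposal matches the paper's proof. Each piece receives the alternating labeling of Proposition~\ref{P:1} (Proposition~\ref{P:2} for a last cycle with $m_t\equiv 2 \bmod 4$) with offset $p_i$, continued along $P_{n_i+1}$, and is shifted down by $q^{(i)}=p_i/2+i-1$ (your $k=r-q^{(i)}$). This places $x_1-1$ on the previous root, makes the lower labels continue consecutively, and puts the new upper block immediately below the previous one, which settles the separation and injectivity check you flag.

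Two details need fixing. Only $x_1-1$, not $x_{m_i/2}-1$, can be placed on $\rho_{i-1}$. In the last piece with $m_t\equiv 2\bmod 4$, the final difference must be $r-p_t-m_t$ rather than $r-m$, so that it supplies the missing class $p_t+m_t$.
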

\begin{proof}
  We are going to prove the statement by iteration, the case $t=1$ being proved in Theorem \ref{T:1}. So we suppose that $t\ge 2$.

  \textbf{Case 1.} Suppose that $m_t\equiv 0\mod 4$ and $n_t$ is even. Let:
  \begin{itemize}
\item $\{A_i,B_i\}$ the bipartition of $V(C_{m_1,(n_1),m_2,(n_2),\dots,m_i,(n_i)})$ such that the roots are elements of $A_i$, for any $i=1,\dots,t$
    \item $r_i=2(m_1+\dots+m_i+n_1+\dots+n_i)+1$ for $i=1,\dots,t$
          \item $s_i=\tfrac{m_i+n_i}{2}$ for $i=1,\dots,t$
  \item $p^{(1)}=0$ and $p^{(i)}=m_1+\dots+m_{i-1}+n_1+\dots+n_{i-1}$ for $i=2,\dots,t$
    \item $q^{(i)}=\tfrac{p^{(i)}}{2}+i-1$ for $i=1,\dots,t$
\end{itemize}
and let for $i=1,\dots,t$
\[
  a^{(i)}_j= \begin{cases}
    p^{(i)}+j & \text{for }j=1,\dots,\dfrac{m_i}{2}-1\\[2ex]
    p^{(i)}+j+1 & \text{for }j=\dfrac{m_i}{2},\dots,m_i+n_i-1.
  \end{cases}
\]
For any $i=1,\dots,t$, let:
\[
  x^{(i)}_j=\begin{cases}
      a^{(i)}_1 & \text{for }j=1\\[2ex]
      \displaystyle{\sum_{h=0}^{j-1}a^{(i)}_{2h+1} - \sum_{h=1}^{j-1}a^{(i)}_{2h}} & \text{for }j=2,\dots,s_i
    \end{cases}
\]
and
  \[
  y^{(i)}_j(r)=\sum_{h=0}^{j-1} a^{(i)}_{2h+1}-\sum_{h=1}^j a^{(i)}_{2h}+r,
\]
for $j=1,\dots,s_i-1$. Note that they are defined as in Corollary \ref{C:2}, with respect to the differences $a^{(i)}_1,\dots,a^{(i)}_{m_i+n_i-1}$.

Keeping in mind the proof of Theorem \ref{T:1},  we can assign a uniformly $(A_1,B_1,r_1-1)$-ordered labeling $g\colon V(C_{m_1,(n_1)})\rightarrow [0,r_1-1]$ such that:
\begin{itemize}
  \item $g(A_1)=\{x_j^{(1)}-q^{(1)}-1\mid j=1,\dots,s_1\}$
  \item $g(B_1)=\{y_j^{(1)}(r_1)-q^{(1)}-1\mid j=1,\dots,s_1-1\}\cup\{r_1-q^{(1)}-1\}$
\end{itemize}
where:
\[
  \max g(A_1)=x^{(1)}_{s_1}-q^{(1)}-1 \quad\text{and}\quad \min g(B_1)=y^{(1)}_{s_1-1}(r_1)-q^{(1)}-1.
\]
Since we are going to prove the statement by iteration, we can suppose that for some $i\in\{2,\dots,t-1\}$ it is possible to assign a uniformly $(A_i,B_i,r_i-1)$-ordered labeling $g\colon V(C_{m_1,(n),m_2,(n_2),\dots,m_{i},(n_i)})\rightarrow [0,r_i-1]$ such that:
\begin{itemize}
  \item $g(A_i)=\bigcup_{h=1}^i\{x_j^{(h)}-q^{(h)}-1\mid\, j=1,\dots,s_h\}$
  \item $g(B_i)=\bigcup_{h=1}^i\{y_j^{(h)}(r_i)-q^{(h)}-1\mid\, j=1,\dots,s_h-1\}\cup\{r_i-q^{(h)}-1\mid\, h=1,\dots,i\}$
\end{itemize}
where:
\[
  \max g(A_i)=x^{(i)}_{s_i}-q^{(i)}-1 \quad\text{and}\quad \min g(B_i)=y^{(i)}_{s_i-1}(r_i)-q^{(i)}-1.
\]
By Remark \ref{R:1} we can replace $r_i$, which we use in the definition of all the $y^{(h)}_j(r_i)$ and $r_i-q^{(h)}-1$, with $r_{i+1}$, so that now we have $y^{(h)}_j(r_{i+1})$ for all $h=1,\dots,i$ and $j=1,\dots,s_h-1$ and $r_{i+1}-q^{(h)}-1$ for all $h=1,\dots,i$. In this way, now we have a uniformly $(A_i,B_i,r_{i+1})$-ordered labeling $g'\colon V(C_{m_1,(n),m_2,(n_2),\dots,m_{i},(n_i)})\rightarrow [0,r_{i+1}-1]$ such that:
\begin{itemize}
  \item $g'(A_i)=\bigcup_{h=1}^i\{x_j^{(h)}-q^{(h)}-1\mid\, j=1,\dots,s_h\}$
  \item $g'(B_i)=\bigcup_{h=1}^i\{y_j^{(h)}(r_{i+1})-q^{(h)}-1\mid\,  j=1,\dots,s_h-1\}\cup\{r_{i+1}-q^{(h)}-1\mid\, h=1,\dots,i\}$
\end{itemize}
where:
\[
  \max g'(A)=x^{(i)}_{s_i}-q^{(i)}-1 \quad\text{and}\quad \min g'(B)=y^{(i)}_{s_i-1}(r_{i+1})-q^{(i)}-1.
\]
We can extend the labeling $g'$ to a labeling
\[
  g''\colon V(C_{m_1,(n_1),m_2,(n_2),\dots,m_{i},(n_i),m_{i+1},(n_{i+1})})\rightarrow [0,r_{i+1}-1]
  \]
  in such a way that:
\begin{itemize}
  \item the vertices of the cycle $C_{m_{i+1}}$ and the path $P_{n_{i+1}+1}$ receive the labels $x^{(i+1)}_j-q^{(i+1)}-1$ for $j=1,\dots,s_{i+1}$, $y^{(i+1)}_j(r_{i+1})-q^{(i+1)}-1$ for $j=1,\dots,s_{i+1}-1$ and $r_{i+1}-q^{(i+1)}-1$
  \item the labelings are assigned in such a way that the restriction of this labeling to $V(C_{m_{i+1},(n_{i+1})})$ is a $(r_{i+1}-q^{(i+1)})$-shift of the labeling given in Theorem \ref{T:1}
        \item the vertex (root) common to the path $P_{n_i+1}$ and the cycle $C_{m_{i+1}}$ maintains the labeling $x^{(i)}_{s_i}-q^{(i)}-1=x^{(i+1)}_1-q^{(i+1)}-1$ (where $x_{s_i}^{(i)}=p^{(i)}+s_i$).
\end{itemize}
Note that:
\begin{itemize}
  \item $r_{i+1}-q^{(i+1)}-1<y^{(i)}_{s_i-1}(r_{i+1})-q^{(i)}-1=\min g'(B_i)$
  \item $x^{(i+1)}_1-q^{(i+1)}-1<\dots<x^{(i+1)}_{s_{i+1}}-q^{(i+1)}-1<y^{(i+1)}_{s_{i+1}-1}(r_{i+1})-q^{(i+1)}-1<\dots<r_{i+1}-q^{(i+1)}-1$.
\end{itemize}
So, $g''$ is a uniformly $(A_{i+1},B_{i+1},r_{i+1})$-ordered labeling
\[
  g''\colon V(C_{m_1,(n_1),m_2,(n_2),\dots,m_i,(n_i),m_{{i+1}},(n_{i+1})})\rightarrow [0,r_{i+1}-1]
\]
  such that:
\begin{itemize}
  \item $g''(A_{i+1})=\bigcup_{h=1}^{i+1}\{x_j^{(h)}-q^{(h)}-1\mid\, j=1,\dots,s_h\}$
  \item $g''(B_{i+1})=\bigcup_{h=1}^{i+1}\{y_j^{(h)}(r_{i+1})-q^{(h)}-1\mid\, j=1,\dots,s_h-1\}\cup\{r_{i+1}-q^{(h)}-1\mid\, h=1,\dots,i+1\}$
\end{itemize}
where:
\[
  \max g''(A_{i+1})=x^{(i+1)}_{s_{i+1}}-q^{(i+1)}-1
\]
and
\[
\min g''(B_{i+1})=y^{(i+1)}_{s_{i+1}-1}(r_{i+1})-q^{(i+1)}-1.
\]
It is possible to iterate this procedure and we get the statement for $i=t-1$.

\textbf{Case 2.} If $m_t\equiv 0\mod 4$ and $n_t$ is odd, the previous merging construction still works, the only differences being that the pendant vertex of $P_{n_t+1}$ is an element of $B_t$ and $ \min g''(B_{t})=y^{(t)}_{s_{t}-1}-q^{(t)}-1$.

\textbf{Case 3.} Let $m_t\equiv 2\mod 4$ and $n_t\ge 0$. By case 1, we simply need a merging at the last step. In this case, the only difference with case 1 is that we take for $n_t\ge 1$:
  \[
    a_j^{(t)}=\begin{cases}
      p^{(t)}+j & \text{for } j=1,\dots,\dfrac{m_t}{2}\\
      p^{(t)}+j+1 & \text{for } j=\dfrac{m_t}{2}+1,\dots,m_t-2\\
      p^{(t)}+j+2 & \text{for } j=m_t-1,\dots,m_t+n_t-2\\
      r_t-p^{(t)}-m_t & \text{for } j=m_t+n_t-1
      \end{cases}
    \]
    and for $n_t=0$:
      \[
    a_j^{(t)}=\begin{cases}
      p^{(t)}+j & \text{for } j=1,\dots,\dfrac{m_t}{2}\\
      p^{(t)}+j+1 & \text{for } j=\dfrac{m_t}{2}+1,\dots,m_t-2\\
      p^{(t)}+m_t+1 & \text{for } j=m_t-1.
      \end{cases}
    \]
  \end{proof}

  Again, as a consequence we immediately get:

\begin{corollary}
  Let $t\in\mathbb N$ and $m_1,\dots,m_t,n_1,\dots,n_t\in\mathbb N$, such that $m_t\equiv 0\mod 2$, $m_i\equiv 0\mod 4$ and $n_i\equiv 0\mod 2$ for $t\ge 2$ and $i=1,\dots,t-1$. Then there for any $v\equiv 1\mod (2(m_1+\dots+m_t+n_1+\dots+n_t))$ exists a cyclic $G$-decomposition of $K_v$, where $G=C_{m_1,(n_1),m_2,(n_2),\dots,m_t,(n_t)}$.
\end{corollary}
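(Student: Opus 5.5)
The statement to prove is the last Corollary: for $G=C_{m_1,(n_1),\dots,m_t,(n_t)}$ under the stated parity hypotheses, $K_v$ admits a cyclic $G$-decomposition for every $v\equiv 1\mod 2(m_1+\dots+m_t+n_1+\dots+n_t)$. The plan is to derive it directly from Theorem \ref{T:5} combined with Theorem \ref{T:0}, exactly as the two earlier corollaries were derived from Theorems \ref{T:1} and \ref{T:3}.

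First I would compute the number of edges of $G$. By construction, $G$ is the union of $t$ cycles $C_{m_i}$ and $t$ paths $P_{n_i+1}$, and these pieces share only vertices, never edges. So $|E(G)|=M:=m_1+\dots+m_t+n_1+\dots+n_t$. I would also note that $G$ is bipartite, since all the $m_i$ are even, and that it has no isolated vertices.

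Next, Theorem \ref{T:5} gives a uniformly ordered labeling of $G$. By the conventions of the paper, this is an $(A,B)$-uniformly ordered labeling with $t=2M$, that is, a $\rho^{++}$-labeling. Every $\rho^{++}$-labeling satisfies condition $(3)$: each edge joins some $a\in A$ to some $b\in B$, and $f(a)\le\lambda<f(b)$. Hence it is in particular a $\rho^+$-labeling.

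Finally, Theorem \ref{T:0} applied to this $\rho^+$-labeling gives a cyclic $G$-decomposition of $K_{2Mx+1}$ for every positive integer $x$. These values $2Mx+1$ are exactly the $v\equiv 1\mod 2M$ with $v>1$. The degenerate case $v=1$ (where $K_1$ has no edges, so the empty decomposition works trivially) is implicitly excluded.

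There is no real obstacle; the only point requiring care is the edge count. One must check that the $t-1$ shared root vertices reduce the vertex count but not the edge count, so that the modulus is indeed $2M$.
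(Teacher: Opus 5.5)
Your proposal is correct and is exactly the paper's route: the corollary follows at once from Theorem \ref{T:5} and Theorem \ref{T:0}, using that a uniformly ordered ($\rho^{++}$) labeling is a $\rho^+$-labeling and that $|E(G)|=m_1+\dots+m_t+n_1+\dots+n_t$. Your extra checks (bipartiteness from the even $m_i$, edge-disjointness of the cycles and paths, and the trivial case $v=1$) are details the paper leaves implicit.
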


\end{document}